\definecolor{MyCiteColor}{rgb}{0,0.45,0}
\definecolor{MyLinkColor}{rgb}{0.0,0.08,0.45}
\definecolor{MyAnchorColor}{rgb}{0,0.08,0.45}    
\newtheoremstyle{indentheorem}%
	{3pt}
	{3pt}
	{}
	{}
	{\itshape}
	{:}
	{.5em}
	{}
\theoremstyle{indentheorem}
\theoremstyle{plain}
\newtheorem{theorem}{Theorem}
\newtheorem{corollary}[theorem]{Corollary}
\newtheorem{proposition}[theorem]{Proposition}
\newtheorem{lemma}[theorem]{Lemma}
\theoremstyle{definition}
\newtheorem{definition}[theorem]{Definition}
\theoremstyle{remark}
\newtheorem{remark}[theorem]{Remark}
\newcommand{\pd}{\partial}
\newcommand{\bC}{\mathbb{C}}
\newcommand{\bE}{\mathbb{E}}
\newcommand{\bF}{\mathbb{F}}
\newcommand{\bK}{\mathbb{K}}
\newcommand{\bN}{\mathbb{N}}
\newcommand{\bR}{\mathbb{R}}
\newcommand{\bZ}{\mathbb{Z}}
\newcommand{\ccD}{\mathcal{D}}
\newcommand{\cP}{\mathcal{P}}
\newcommand{\cS}{\mathcal{S}}
\newcommand{\sC}{\mathscr{C}}
\newcommand{\sT}{\mathscr{T}}
\newcommand{\csub}{\subset \subset}
\newcommand{\coleq}{\mathrel{\mathop:}=}
\DeclareMathOperator{\id}{id}
\DeclareMathOperator{\Vol}{Vol}
\DeclareMathOperator{\pr}{pr}
\newcommand{\cTrsM}{\mathcal{T}^r_s(M)}
\newcommand{\cTsrM}{{\ensuremath{\mathcal{T}^s_r(M)}}}
\newcommand{\DprsM}{\ccD'^r_s(M)}
\newcommand{\ocM}{{\Omega^n_c(M)}}
\newcommand{\DpM}{\ccD'(M)}
\newcommand{\TsrM}{\mathrm{T}^s_r(M)}
\newcommand{\Lin}{\mathrm{L}}
\providecommand{\norm}[1]{\left\lVert#1\right\rVert}
\providecommand{\abso}[1]{\left\lvert#1\right\rvert}
\newcommand{\fp}{\mathfrak{p}}
\newcommand{\Linb}{\mathrm{L}^b}
\newcommand{\Linc}{\mathrm{L}^c}
\newcommand{\CinfM}{{C^\infty(M)}}
\newcommand{\sA}{\mathscr{A}}
\newcommand{\sB}{\mathscr{B}}
\newcommand{\sD}{\mathscr{D}}
\title{Bornologically isomorphic representations of distributions on manifolds}
\author{E. Nigsch}
\begin{document}
 
\maketitle

\begin{abstract}
Distributional tensor fields can be regarded as multilinear mappings with distributional values or as (classical) tensor fields with distributional coefficients. We show that the corresponding isomorphisms hold also in the bornological setting.
\end{abstract}

\section{Introduction}

Let $\DpM \coleq \Gamma_c(M, \Vol(M))'$ and $\DprsM \coleq \Gamma_c(M, \TsrM \otimes \Vol(M))'$ be the strong duals of the space of compactly supported sections of the volume bundle $\Vol(M)$ and of its tensor product with the tensor bundle $\TsrM$ over a manifold; these are the spaces of scalar and tensor distributions on $M$ as defined in \cite{GKOS,Hoermander}. A property of the space of tensor distributions which is fundamental in distributional geometry is given by the $\CinfM$-module isomorphisms
\begin{align}\label{the_isos}
\DprsM &\cong \Lin_{\CinfM}(\cTsrM, \DpM) \cong \cTrsM \otimes_\CinfM \DpM
\end{align}
(cf.~\cite[Theorem 3.1.12 and Corollary 3.1.15]{GKOS}) where $\CinfM$ is the space of smooth functions on $M$. In \cite{global2} a space of Colombeau-type nonlinear generalized tensor fields was constructed. This involved handling smooth functions (in the sense of convenient calculus as developed in \cite{KM}) in particular on the $\CinfM$-module tensor products $\cTrsM \otimes_\CinfM \DpM$ and $\Gamma(E) \otimes_\CinfM \Gamma(F)$, where $\Gamma(E)$ denotes the space of smooth sections of a vector bundle $E$ over $M$. In \cite{global2}, however, only minor attention was paid to questions of topology on these tensor products. One can circumvent this issue by declaring the occurring algebraic isomorphisms to be homeomorphisms, but this is not truly satisfying.

The aim of this article is to show that the isomorphisms \eqref{the_isos} are even bornological (not topological) isomorphisms. Naturally this involves the right choice of topologies on spaces of $\CinfM$-linear mappings and on tensor products of locally convex modules. Because there is only fragmentary literature available on tensor products of locally convex modules 
we will be rather explicit in our treatment.

A bornological isomorphism is enough for the applications in \cite{global2} because the notion of smoothness employed there depends only on the bornology of the respective spaces.

After some preliminaries in Section \ref{sec_prel} we will review inductive locally convex topologies and final convex bornologies defined by arbitrary (i.e., non-linear) mappings in Section \ref{sec_indtop}. Then the bornological and projective tensor product of locally convex and bounded modules are defined and usual properties of tensor products established in this setting (Section \ref{sec_tensprod}). Afterwards we will describe the natural Fr\'echet topology on spaces of sections (Section \ref{sec_sectop}) and show that some canonical algebraic isomorphisms for spaces of sections are homeomorphisms as well. As a main result we obtain that the classical isomorphism of section spaces $\Gamma(E) \otimes_\CinfM \Gamma(F) \cong \Gamma(E \otimes F)$ is a homeomorphism if one uses the projective tensor product, while for compactly supported sections one has to use the bornological tensor product (Section \ref{sec_iso}).

In Section \ref{sec_distrib} we obtain the desired result \eqref{the_isos} on distributions in the bornological setting. It does not work in the topological setting for two reasons: first, multiplication of distributions by smooth functions is jointly bounded (Lemma \ref{dpmult}) but only separately continuous; and second, the bornological tensor product has better algebraic properties (Remark \ref{algprop}).

\section{Preliminaries}\label{sec_prel}

Our basic references are \cite{Jarchow, Schaefer, Treves} for topological vector spaces, \cite{henri} for bornological spaces, \cite{Lang} for differential geometry, and \cite{Bourbaki} for algebra.

All locally convex spaces are over the field $\bK$ which is either $\bR$ or $\bC$ and will be assumed to be Hausdorff. In the non-Hausdorff case we speak of a topological vector space with locally convex topology.

We will use the following notation: for vector spaces $E_1, \dotsc, E_n, F$, $\Lin(E_1, \dotsc, E_n; F)$ is the vector space of all $n$-multilinear mappings from $E_1 \times \dotsc \times E_n$ to $F$. We write $\Lin(E,F)$ instead of $\Lin(E; F)$. $F^* = \Lin(F, \bK)$ denotes the algebraic dual of $F$. If we have locally convex spaces then $\Linb(E_1,\dotsc,E_n; F)$ denotes the space of bounded multilinear mappings equipped with the topology of uniform convergence on bounded sets as in \cite[Section 5]{KM}. $\Linc(E_1, \dotsc, E_n; F)$ is the subspace of all continuous mappings equipped with the subspace topology. $E' = \Linc(E, \bK)$ denotes the topological dual of a locally convex space $E$ equipped with the strong dual topology.

For any $R$-modules $M_1, \dotsc, M_n$ and $N$, $\Lin_R(M_1, \dotsc, M_n; N)$ is the space of $R$-multilinear mappings from $M_1 \times \dotsc \times M_n$ to $N$. If these are  locally convex modules as in Definition \ref{lcmod} below (with $\bK \subseteq R$) the subspace $\Linb_R(M_1, \dotsc, M_n; N) \subseteq \Linb(M_1, \dotsc, M_n; N)$ is the space of bounded $R$-multilinear mappings from $M_1 \times \dotsc \times M_n$ to $N$ equipped with the subspace topology. We also equip the subspace $\Linc_R(E_1, \dotsc, E_n; F) \subseteq \Linb_R(E_1, \dotsc, E_n; F)$ of all continuous such mappings with the subspace topology.

Let $M$ be a right module and $N$ a left module over a ring $A$ and $E$ a $\bZ$-module. A $\bZ$-bilinear mapping $f: M \times N \to E$ is called $A$-\emph{balanced} if $f(m a, n) = f(m, a n)$ for all $a \in A$, $m \in M$ and $n \in N$. Because we want to obtain vector spaces we assume that $\bK \subseteq A$ and $E$ is a vector space and denote by $\Lin^A(M,N; E)$ the subspace of $\Lin(M,N;E)$ consisting of $A$-balanced $\bK$-bilinear mappings. If $M,N$ are locally convex or bounded modules and $E$ a locally convex space $\Lin^{A,b}(M,N;E)$ and $\Lin^{A,c}(M,N;E)$ are the subspaces of bounded and continuous mappings, respectively.

Note that in all the above cases the subspace topology is again the topology of uniform convergence on bounded sets, which is locally convex and Hausdorff.

All manifolds are supposed to be finite dimensional, second countable, and Hausdorff. Vector bundles are always finite dimensional. The space of sections of a vector bundle $E$ over a manifold $M$ is denoted by $\Gamma(M, E)$, the space of compactly supported sections by $\Gamma_c(M, E)$, and the space of sections with compact support in a set $K \subseteq M$ by $\Gamma_{c,K}(M, E)$.

\section{Final and initial structures in topology and bornology}\label{sec_indtop}

The projective tensor product $E \otimes F$ of two locally convex spaces carries the inductive locally convex topology with respect to the canonical bilinear mapping $\otimes\colon E \times F \to E \otimes F$. Now there are several shortcomings in standard references, of which we mention two: first, in \cite[Section 15.1]{Jarchow} $E \otimes F$ is said to be endowed with the \emph{finest topology} (not locally convex topology) which makes $\otimes$ continuous, and it is claimed that this topology is locally convex by referring to the corresponding proposition about the \emph{projective} topology, which does not apply here; furthermore, the universal property of the inductive locally convex topology is only mentioned for linear mappings (\cite[Section 6.6]{Jarchow}) but $\otimes$ is bilinear. Second, \cite{Treves} correctly takes the finest locally convex topology on $E \otimes F$ such that $\otimes$ is continuous but does not show its universal property.

The construction of the projective tensor product in \cite{Schaefer} is done directly without reference to the inductive topology, which works for the purpose. We will give the inductive locally convex topology as well as its universal property also with respect to nonlinear mappings. Additionally we will have to consider the bornological tensor product which we will introduce from the topological and the bornological point of view. Similarly to the topological case, in the standard reference \cite{henri} on bornologies the final vector  or convex bornology, respectively, is only treated with respect to \emph{linear} mappings; we will show that the construction of the final vector or convex bornology, respectively, outlined there works for arbitrary mappings, too.

Given a set $E$, topological spaces $E_j$ and mappings $T_j: E \to E_j$ we denote the projective topology on $E$ defined by these mappings by $\sT_i$. In the linear or locally convex case there is no generalization to arbitrary mappings: given any Hausdorff topological vector space $E$ the projective topology with respect to the constant mappings $f_x(y) \coleq 0$ if $y=x$ and $x$ if $y \ne x$ for all $x \in E$ is the discrete topology which cannot be linear. As any linear topology making all $f_x$ continuous would be finer than the projective topology there can be none. However, in the case of the inductive topology we can allow arbitrary mappings:

\begin{lemma}\label{l2}Let $(E_j)_j$ be a family of topological vector spaces, $E$ a vector space and $S_j\colon E_j \to E$ an arbitrary mapping for each $j$. Then there is a finest linear topology $\sT_l$ on $E$ such that all $S_j$ are continuous into $[E, \sT_l]$. A linear mapping $T$ from $[E, \sT_l]$ into any topological vector space is continuous if and only if all $T \circ S_j$ are so. The same statements hold for all topologies being locally convex.
\end{lemma}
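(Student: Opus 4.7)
The plan is to define $\sT_l$ as the supremum, in the lattice of linear topologies on $E$, of the family $\mathcal{F}$ of all linear topologies on $E$ making every $S_j$ continuous, and then to deduce the universal property via a pull-back argument. The same idea already works when the $S_j$ are linear; the point to register here is that continuity of the $S_j$ into $E$ involves nothing algebraic on their domain side, so dropping linearity of the $S_j$ causes no trouble.

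The first step I would carry out is the lattice-theoretic one: the supremum of an arbitrary family $(\sT_i)_i$ of linear topologies on $E$ is again a linear topology. A $0$-neighborhood base of $\sup_i \sT_i$ is given by finite intersections of $0$-neighborhoods of the individual $\sT_i$, and the defining axioms of a linear topology -- balancedness, absorbency, and the existence for every $0$-neighborhood $U$ of some $V$ with $V+V\subseteq U$ -- are stable under finite intersection. The same argument with absolutely convex $0$-neighborhoods shows that the supremum of locally convex topologies is locally convex.

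Next I would verify that $\mathcal{F}$ is itself closed under suprema. Since $S_j^{-1}$ commutes with arbitrary unions and finite intersections, whenever $S_j$ is continuous into each $[E,\sT_i]$ for $\sT_i \in \mathcal{F}$, it is continuous into $[E,\sup_i \sT_i]$ as well. As $\mathcal{F}$ is nonempty (the indiscrete topology belongs to it), one can then set $\sT_l \coleq \sup \mathcal{F}$; this is the finest element of $\mathcal{F}$, proving the first claim in both the linear and the locally convex setting.

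For the universal property, one direction is automatic since $T \circ S_j$ is a composition of continuous maps whenever $T$ is. For the converse, suppose $T \colon E \to F$ is linear and all $T \circ S_j$ are continuous. The initial topology $\sT_T$ on $E$ induced by $T$ and the topology of $F$ is linear (resp.\ locally convex) because $T$ is linear and $F$ has a linear (resp.\ locally convex) topology. The identity $(T \circ S_j)^{-1} = S_j^{-1} \circ T^{-1}$ shows that each $S_j$ is continuous into $[E,\sT_T]$, so $\sT_T \in \mathcal{F}$ and hence $\sT_T$ is coarser than $\sT_l$; this is exactly the continuity of $T$ from $[E,\sT_l]$ into $F$. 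There is no real obstacle -- the proof is just bookkeeping of which properties survive the relevant lattice operations -- but the one point worth stating carefully is the preservation of linearity and local convexity under suprema, since that is precisely what fails for \emph{projective} (initial) topologies with respect to nonlinear maps, as the example in the paragraph preceding the lemma shows.
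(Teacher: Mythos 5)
Your proof is correct and takes essentially the same route as the paper: both construct $\sT_l$ as the supremum of the (nonempty, thanks to the trivial topology) family of all linear resp.\ locally convex topologies making the $S_j$ continuous, and both prove the universal property by pulling back the topology of the target along $T$ and observing that $S_j^{-1}(T^{-1}(\sT)) = (T\circ S_j)^{-1}(\sT)$ consists of open sets. The only difference is presentational: the paper realizes the supremum as the projective topology with respect to identity mappings (so linearity/local convexity comes from the standard theory of initial topologies under linear maps), whereas you verify directly that suprema of linear resp.\ locally convex topologies are of the same type.
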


\begin{proof}
$\sT_l$ is obtained as the projective topology defined by the family of identity mappings from $E$ into all linear (locally convex) topologies $\sT$ on $E$ such that all $S_j$ are continuous w.r.t.\ $\sT$; this family contains at least the trivial topology given by $\sT = \{\emptyset, E\}$.

Given $[F, \sT]$ with $\sT$ a linear (locally convex) topology and $T \in \Lin(E,F)$, $T\colon [E, T^{-1}(\sT)] \to [F, \sT]$ is continuous; all $S_j$ are continuous into the linear topology $T^{-1}(\sT)$ 
(the preimage of a linear or locally convex topology is of the same type) because $S_j^{-1}(T^{-1}(\sT)) = (T \circ S_j)^{-1}(\sT)$ is a family of open sets by assumption, thus $\sT_l$ is finer than $T^{-1}(\sT)$ and $T\colon [E, \sT_l] \to [E, T^{-1}(\sT)] \to [F, \sT]$ is continuous.
\end{proof}

$\sT_l$ is called the inductive linear (locally convex) topology defined by the family $(S_j)_j$. We now will consider the bornological setting. The following is easily seen from the respective definitions in \cite{henri}.

\begin{lemma}\label{bornbasis}Let $X$ be a set and $\sB_0$ a family of subset of $X$. Then $\sB_0$ is a base for a bornology on $X$ if and only if $\sB_0$ covers $X$ and every finite union of elements of $\sB_0$ is contained in a member of $\sB_0$. If $X$ is a vector space, $\sB_0$ is a base for a vector bornology on $X$ if and only if additionally it every finite sum of elements of $\sB_0$ is contained in a member of $\sB_0$,
every homothetic image (scalar multiple) of an element of $\sB_0$ is contained in a member of $\sB_0$, and every circled hull of an element of $\sB_0$ is contained in a member of $\sB_0$. $\sB_0$ is a base for a convex bornology on $X$ if and only if it is a base for a vector bornology and every convex hull of elements of $\sB_0$ is contained in a member of $\sB_0$.
\end{lemma}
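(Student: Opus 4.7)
The plan is to verify both directions by a direct, stability-based argument: given a candidate base $\sB_0$ satisfying the listed closure properties, I would define $\sB$ to be the downward closure of $\sB_0$, i.e.\ the collection of all subsets of $X$ contained in some member of $\sB_0$, and then check that $\sB$ satisfies the axioms of a (vector, convex) bornology. The converse direction is essentially tautological: if $\sB_0$ is already a base of a bornology $\sB$, then each stability axiom of $\sB$ implies the corresponding ``contained in a member of $\sB_0$'' condition, since every bounded set in $\sB$ is by definition contained in a base element.

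For the substantive direction, covering and closure under finite unions transfer from $\sB_0$ to $\sB$ by a routine inclusion argument: if $A_1, \dotsc, A_n \in \sB$ with $A_i \subseteq B_i \in \sB_0$, then $\bigcup A_i \subseteq \bigcup B_i$, and by hypothesis the latter lies in some member of $\sB_0$, hence the union lies in $\sB$. Closure under the subset operation is built into the definition of $\sB$. The same pattern then handles the vector-bornology axioms: for sums use $A_1 + A_2 \subseteq B_1 + B_2$, for scalar multiples use $\lambda A \subseteq \lambda B$, and for circled hulls use monotonicity of the circled-hull operator $A \mapsto \Gamma(A)$, namely $A \subseteq B \Rightarrow \Gamma(A) \subseteq \Gamma(B)$, together with the assumption that $\Gamma(B)$ is contained in some member of $\sB_0$.

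For the convex-bornology statement I would repeat this template once more using monotonicity of the convex-hull operator. By construction $\sB_0 \subseteq \sB$ and every element of $\sB$ is contained in an element of $\sB_0$, so $\sB_0$ is a base of $\sB$ in the sense of \cite{henri}.

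There is no real obstacle here; the only point to be careful about is that a ``base'' is defined up to enlargement (containment in a base element) rather than equality, so all closure hypotheses and all verifications must be formulated with ``$\subseteq$'' rather than ``$=$''. Once monotonicity of the circled- and convex-hull operators is invoked, each clause of the lemma reduces to the corresponding clause of the definition of bornology in \cite{henri}, which is why the author can dispatch the proof with the phrase \emph{easily seen from the respective definitions}.
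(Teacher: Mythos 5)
Your proposal is correct and is precisely the routine verification the paper has in mind: the paper offers no written proof of this lemma, dismissing it as ``easily seen from the respective definitions in \cite{henri},'' and your argument---taking $\sB$ to be the downward closure of $\sB_0$, checking each bornology axiom via monotonicity of unions, sums, homothety, and the circled/convex hull operators, with the converse following tautologically from the definition of a base---is exactly that definitional check, carried out with the correct care about ``$\subseteq$'' versus ``$=$''. Nothing is missing.
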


\begin{lemma}\label{borngen}Let $X$ be a set and $\sA$ be any family of subsets of $X$. Define the family $\sD \coleq \sA \cup \{\, \{x\}\ |\ x \in X\,\}$. Then a base of the bornology generated by $\sA$ is given by all finite unions of elements of $\sD$.
If $X$ is a vector space a base of the vector bornology generated by $\sA$ is given by all subsets of $X$ which can be obtained from elements of $\sD$ by any finite combination of finite sums, finite unions, homothetic images, and circled hulls.
For the convex bornology generated by $\sA$ one further has to include convex hulls.
\end{lemma}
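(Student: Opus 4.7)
The plan is to apply Lemma \ref{bornbasis} in each of the three cases: verify that the proposed family $\sB_0$ covers $X$ and is closed under the operations appropriate to the structure (finite unions for a bornology; additionally finite sums, homothetic images and circled hulls for a vector bornology; additionally convex hulls for a convex bornology), so that $\sB_0$ is a base of some (vector, convex) bornology $\sB$, and then check that $\sB$ coincides with the (vector, convex) bornology generated by $\sA$.

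The crucial observation that motivates enlarging $\sA$ to $\sD$ is that any bornology on $X$ contains every singleton: by the covering axiom each $x\in X$ lies in some bounded set, and by heredity $\{x\}$ is then bounded. Consequently any (vector, convex) bornology containing $\sA$ automatically contains $\sD$. In each of the three cases the family $\sB_0$ is defined precisely as the closure of $\sD$ under the operations relevant for the structure at hand, so by construction $\sB_0$ is contained in every such bornology. Covering of $X$ follows immediately from $\{x\}\in\sD\subseteq\sB_0$, and the closure conditions of Lemma \ref{bornbasis} hold by construction, giving that $\sB_0$ is indeed a base of some (vector, convex) bornology $\sB$. That $\sB$ equals the one generated by $\sA$ then follows from two inclusions: $\sA\subseteq\sB_0\subseteq\sB$ by construction, while the converse holds because, as just noted, $\sB_0$ lies in every bornology of the given type that contains $\sA$, hence so does the hereditary hull $\sB$.

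The main (mild) obstacle is purely bookkeeping: one must distinguish the base $\sB_0$ from the bornology $\sB$ it generates (namely the set of all subsets of members of $\sB_0$), and invoke precisely those axioms of Lemma \ref{bornbasis} that match the flavor of bornology being discussed. Beyond that, the verification of the closure conditions for $\sB_0$ reduces to a direct induction on the construction of its elements, and no deeper tool is required.
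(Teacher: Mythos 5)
Your proof is correct and takes essentially the same route as the paper's: define $\sB_0$ as the closure of $\sD$ under the operations appropriate to the respective structure, invoke Lemma \ref{bornbasis} to see that $\sB_0$ is a base, and conclude by noting that any bornology of the given type containing $\sA$ contains $\sD$ and, being closed under those operations, all of $\sB_0$. Your explicit remark that every bornology contains all singletons (covering plus heredity), which justifies passing from $\sA$ to $\sD$, is a detail the paper leaves implicit but uses in the step ``containing $\sA$ and thus $\sD$''.
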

\begin{proof}
%
%

Let $\sB_0$ be the family of all subsets of $X$ which can be obtained from elements of $\sD$ by the respective operations. By Lemma \ref{bornbasis} $\sB_0$ is a base for a  bornology (vector bornology, convex bornology) on $X$. Any bornology (vector bornology, convex bornology) $\sC$ on $X$ containing $\sA$ and thus $\sD$ is closed under the same operations which are applied to elements of $\sD$ in order to construct $\sB_0$, $\sB_0$ is finer than $\sC$. This means that $\sB_0$ is a base of the bornology (vector bornology, convex bornology) generated by $\sA$.
\end{proof}

\begin{proposition}Let $X$ be a set and $[X_i, \sB_i]$ bornological sets with any mappings $v_i\colon X_i \to X$. Let $\sB_f$ be the bornology on $X$ generated by the family $\sA = \bigcup_{i \in I} v_i(\sB_i)$. Then $\sB_f$ is the finest bornology on $X$ such that all $v_i$ are bounded. A mapping $v$ from $[X, \sB_f]$ into a bornological set $[Y, \sC]$ is bounded if and only if all compositions $v \circ v_i$ are bounded.

The same holds analogously for the vector (convex) bornology on a vector space $X$ generated by $\sA$ and a linear mapping $v$ into a vector (convex) bornological space $[Y, \sC]$ where the $v_i$ can be arbitrary.
 \end{proposition}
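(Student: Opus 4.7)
The plan is to address the three assertions in order: that each $v_i$ is bounded into $[X, \sB_f]$, that $\sB_f$ is the finest bornology with this property, and that a map out of $X$ is bounded exactly when all its compositions with the $v_i$ are. The argument will parallel the proof of Lemma \ref{l2} but using Lemma \ref{borngen} instead of projective topologies, and the vector/convex case will only require observing that linear maps commute with the closure operations used in generating vector/convex bornologies.

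For the first claim, $v_i(\sB_i) \subseteq \sA \subseteq \sB_f$ by the very definition of $\sA$, so each $v_i$ is bounded. For the finest property, I would take an arbitrary bornology $\sB'$ on $X$ making every $v_i$ bounded: then $v_i(\sB_i) \subseteq \sB'$ for each $i$, hence $\sA \subseteq \sB'$, and since $\sB_f$ is by definition the smallest bornology containing $\sA$ (cf.\ Lemma \ref{borngen}) we conclude $\sB_f \subseteq \sB'$, which in the Hogbe-Nlend convention means $\sB_f$ is finer than $\sB'$.

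The universal property is the main content. One direction is immediate since the composition of bounded maps is bounded. For the converse, assume every $v \circ v_i$ is bounded. By Lemma \ref{borngen}, every $B \in \sB_f$ is contained in a finite union of elements of $\sD = \sA \cup \{\{x\} \mid x \in X\}$, so it suffices to show that $v$ sends each element of $\sD$ into $\sC$. Singletons are bounded trivially, and for $D = v_i(B_i) \in \sA$ one has $v(D) = (v\circ v_i)(B_i) \in \sC$ by hypothesis; closure of $\sC$ under finite unions and passage to subsets then yields $v(B) \in \sC$.

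For the vector (convex) bornology case, Lemma \ref{borngen} gives a base consisting of sets obtained from $\sD$ by iterating finite sums, finite unions, homothetic images, and circled hulls (and additionally convex hulls in the convex case). The key observation is that a linear $v$ commutes with each of these operations: $v(A+B) = v(A) + v(B)$, $v(\lambda A) = \lambda v(A)$, and $v$ of a circled (resp.\ convex) hull is the circled (resp.\ convex) hull of $v(A)$. Consequently $v(B)$ for any base element $B$ of $\sB_f$ is built from $v(\sD)$ by the same operations, and since every element of $v(\sD)$ lies in $\sC$ (singletons, or values of $v \circ v_i$ on bounded sets), the result lies in the vector (convex) bornology $\sC$. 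The only subtle point is keeping track of the bornological convention for ``finest'' and verifying that linearity is genuinely needed in the vector/convex case (for the singleton-type generators $\{x\}$ linearity is irrelevant, but for preserving circled and convex hulls it is essential); no serious technical obstacle arises beyond careful bookkeeping with Lemma \ref{borngen}.
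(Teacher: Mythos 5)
Your proposal is correct and follows essentially the same route as the paper: both arguments rest on the explicit base of the generated bornology from Lemma \ref{borngen} and the fact that a (linear) map carries the generating operations (unions, sums, homothetic images, circled/convex hulls) through, so that images of base elements are bounded. The only cosmetic difference is that the paper factors the converse through an auxiliary bornology $\sC_f$ on $Y$ generated by $\bigcup_i (v \circ v_i)(\sB_i)$ and observes $v$ maps a base to a base, whereas you verify boundedness into $\sC$ directly — and you are in fact slightly more careful than the paper in separating the plain-bornology case (where $v$ is arbitrary and only singletons and finite unions are needed) from the vector/convex case where linearity is genuinely used.
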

\begin{proof}
Any bornology $\sC$ on $X$ such that the $v_i$ are bounded has to contain $\bigcup_i v_i(\sB_i)$. By definition $\sB_f$ is the finest bornology (vector bornology, convex bornology) containing this set so $\sB_f$ is the finest bornology of its type such that all $v_i$ are bounded.

If $v$ is bounded the $v \circ v_i$ trivially are so. Conversely, assume that all the $v \circ v_i$ are bounded into $[Y, \sC]$. Let $\sC_f$ be the bornology (vector bornology, convex bornology) on $Y$ generated by $\bigcup_i (v \circ v_i)(\sB_i)$. Because $\sC_f$ is finer than $\sC$ it suffices to show that $v$ is bounded into $\sC_f$. Because $v$ is linear it maps the base of $\sB_f$ given by Lemma \ref{borngen} to a base of $\sC_f$ which implies that $v$ is bounded into $\sC_f$.
\end{proof}

We call $\sB_f$ the final bornology (vector bornology, convex bornology) defined by the $v_i$. Given any locally convex topology $\sT$ we denote by $\prescript{\mathrm{b}}{}\sT$ its von Neumann bornology (\cite[1:3]{henri}). Conversely, $\prescript{\mathrm{t}}{}\sB$ denotes the locally convex topology associated with a convex bornology $\sB$ (\cite[4:1]{henri}). Whenever we talk of boundedness of a mapping from or into a topological vector space with locally convex topology it is meant with respect to its von Neumann bornology.

In order to relate the bornological to the topological setting we will make use of the following Lemma.

\begin{lemma}\label{borninftop}Let $E_i$ be a topological vector space with locally convex topology and $v_i\colon E_i \to F$ an arbitrary mapping into a vector space $F$ for each $i$. Denote by $\sT_f$ the finest locally convex topology on $F$ such that each $v_i$ is bounded and by $\sB_f$ the finest convex bornology on $F$ such that each $v_i$ bounded. Then $\sT_f = \prescript{\mathrm{t}}{}\sB_f$. 
\end{lemma}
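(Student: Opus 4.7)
The plan is to establish $\sT_f = \prescript{\mathrm{t}}{}\sB_f$ via the standard Galois connection between convex bornologies and locally convex topologies on $F$: for a locally convex $\sT$ and a convex bornology $\sB$, one has $\sT \subseteq \prescript{\mathrm{t}}{}\sB$ if and only if $\sB \subseteq \prescript{\mathrm{b}}{}\sT$, since a convex balanced $\sT$-neighborhood of $0$ absorbs every member of $\sB$ precisely when it is a neighborhood of $0$ for $\prescript{\mathrm{t}}{}\sB$. I would reformulate boundedness of $v_i\colon E_i \to [F,\sT]$ as the condition that the $v_i$-image of every bounded set of $E_i$ lies in $\prescript{\mathrm{b}}{}\sT$, and recall from the preceding proposition that $\sB_f$ is the convex bornology on $F$ generated by the collection of all such images.

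For the inclusion $\prescript{\mathrm{t}}{}\sB_f \subseteq \sT_f$ I would verify that $\prescript{\mathrm{t}}{}\sB_f$ is itself admissible, i.e.\ renders every $v_i$ bounded. This follows from the tautology $\sB_f \subseteq \prescript{\mathrm{b}}{}(\prescript{\mathrm{t}}{}\sB_f)$ built into the definition of $\prescript{\mathrm{t}}{}$: the $v_i$-image of a bounded set already belongs to $\sB_f$ and is therefore $\prescript{\mathrm{t}}{}\sB_f$-bounded. Maximality of $\sT_f$ then supplies the inclusion. For the reverse inclusion $\sT_f \subseteq \prescript{\mathrm{t}}{}\sB_f$, I would observe that any admissible locally convex topology $\sT$ on $F$ has the property that $\prescript{\mathrm{b}}{}\sT$ is a convex bornology containing all images $v_i(B)$ of bounded $B \subseteq E_i$, hence it contains the generated bornology $\sB_f$; the Galois connection then forces $\sT \subseteq \prescript{\mathrm{t}}{}\sB_f$. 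Applied to $\sT = \sT_f$ itself, which exists as the supremum of admissible topologies and is itself admissible (a set absorbed by each of finitely many convex balanced neighborhoods is absorbed by their intersection), this yields the desired inclusion.

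The main obstacle I anticipate is purely formal rather than analytic: keeping the direction of the finer/coarser order straight on both sides of the Galois correspondence, confirming that the von Neumann bornology of a locally convex topology is genuinely convex (so that it contains any convex bornology generated by a subset of it), and checking that the supremum of admissible topologies remains admissible. No ingredient beyond this Galois-theoretic formalism appears to be required.
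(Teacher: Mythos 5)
Your proof is correct and follows essentially the same route as the paper: both arguments establish the two inclusions by (a) noting $\sB_f \subseteq \prescript{\mathrm{b}}{}{(\prescript{\mathrm{t}}{}\sB_f)}$ so that $\prescript{\mathrm{t}}{}\sB_f$ is admissible and maximality of $\sT_f$ applies, and (b) noting $\sB_f \subseteq \prescript{\mathrm{b}}{}\sT_f$ (you via minimality of the generated convex bornology, the paper via the universal property of the final bornology, which is proved from exactly that minimality) and then invoking the adjunction between $\prescript{\mathrm{b}}{}{(\cdot)}$ and $\prescript{\mathrm{t}}{}{(\cdot)}$, which the paper cites from Hogbe-Nlend and you verify directly as a Galois connection. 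Your explicit check that $\sT_f$ exists and is itself admissible as the supremum of admissible topologies is a harmless addition that the paper leaves implicit.
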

\begin{proof}
We show that each $v_i$ is bounded into $\prescript{\mathrm{t}}{}\sB_f$, which implies that $\sT_f$ is finer than $\prescript{\mathrm{t}}{}\sB_f$. Given a bounded set $B$ in $E_i$ its image $v_i(B)$ is bounded in $\sB_f$ by assumption. Because the identity $[F, \sB_f] \to [F, \prescript{\mathrm{t}}{}\sB_f]$ is bounded (\cite[4:1]{henri}), $v_i(B)$ is bounded in $\prescript{\mathrm{t}}{}\sB_f$.

Conversely, the identity $[F, \sB_f] \to [F, \sT_f]$ is bounded if and only if all mappings $v_i\colon E_i \to [F, \sT_f]$ are bounded, which is the case by construction, thus $\sB_f$ is finer than $\prescript{b}{}\sT_f$. By definition of the locally convex topology associated with a convex bornology (\cite[4:1'2]{henri}) $\prescript{\mathrm{t}}{}\sB_f$ is finer than $\sT_f$.
\end{proof}
By \cite[4:1'5 Definition (2) and Lemma (2)]{henri} we obtain

\begin{corollary}\label{borninftop_born}In the situation of Lemma \ref{borninftop} $\sT_f$ is bornological.
\end{corollary}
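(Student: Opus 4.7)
My plan is to simply combine Lemma \ref{borninftop} with the cited result from Hogbe-Nlend. The lemma has just established the identification $\sT_f = \prescript{\mathrm{t}}{}\sB_f$, i.e., $\sT_f$ is realized as the locally convex topology associated with a convex bornology. The content of \cite[4:1'5 Definition (2) and Lemma (2)]{henri} is precisely that for \emph{any} convex bornology $\sB$ on a vector space, the associated locally convex topology $\prescript{\mathrm{t}}{}\sB$ is bornological (equivalently, a locally convex space is bornological iff its topology comes from some convex bornology in this way).

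So the proof is a one-liner: by Lemma \ref{borninftop}, $\sT_f = \prescript{\mathrm{t}}{}\sB_f$, and $\sB_f$ is a convex bornology (it is the final convex bornology furnished by the preceding proposition), hence $\sT_f$ is bornological by \cite[4:1'5]{henri}. There is nothing to verify by hand about the final topology itself — all the work has been front-loaded into Lemma \ref{borninftop}, which translates the topological final structure into a bornological one, and into the general principle that topologies generated from convex bornologies are automatically bornological.

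The only conceivable obstacle would be if one had to verify directly that every bounded linear map out of $[F,\sT_f]$ is continuous. One could do this by hand using the universal property: if $T\colon [F,\sT_f] \to G$ is linear and bounded, then each $T \circ v_i\colon E_i \to G$ is bounded, and one would want to promote this to continuity by invoking that the $E_i$ are locally convex and hence that boundedness suffices on their von Neumann bornologies — but this essentially reproduces the proof of the Hogbe-Nlend result, so there is no reason not to invoke it directly. Thus I expect no obstacle at all; the corollary is a packaging of Lemma \ref{borninftop} with a standard fact.
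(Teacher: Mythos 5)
Your proposal is correct and is precisely the paper's argument: the paper derives the corollary by combining Lemma \ref{borninftop}, which identifies $\sT_f$ with $\prescript{\mathrm{t}}{}\sB_f$ for the final convex bornology $\sB_f$, with \cite[4:1'5 Definition (2) and Lemma (2)]{henri}, which says exactly that a topology of the form $\prescript{\mathrm{t}}{}\sB$ for a convex bornology $\sB$ is bornological. Your remark that one need not re-verify by hand that bounded linear maps out of $[F,\sT_f]$ are continuous is also right; the paper likewise treats this as immediate from the cited result.
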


We recall that a bornological vector space is separated (i.e., $\{0\}$ is the only bounded vector subspace) if and only if $\{0\}$ is Mackey-closed (\cite[2:11 Proposition (1)]{henri}). By \cite[1:4'2 Proposition (1)]{henri} and \cite[Chapter 36]{Treves}) one immediately obtains the following (the converse does not hold, in general):

\begin{lemma}\label{hausdorffsepp}Let $[E,\sT]$ be a topological vector space with locally convex topology. If $\sT$ is Hausdorff then $\prescript{\mathrm{b}}{}\sT$ is separated.
\end{lemma}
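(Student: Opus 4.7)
The plan is very short: the content of the lemma is essentially that a Hausdorff locally convex topology admits no nonzero bounded line, hence no nonzero bounded subspace. By the characterization just recalled from \cite[2:11 Proposition (1)]{henri}, $\prescript{\mathrm{b}}{}\sT$ is separated if and only if $\{0\}$ is the only $\sT$-bounded linear subspace of $E$. So I would take an arbitrary $\sT$-bounded vector subspace $V \subseteq E$ together with some $v \in V$ and aim to conclude $v = 0$.

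Since $V$ is a subspace, it contains the entire line $\bK v$, which must therefore be $\sT$-bounded. Because $\sT$ is Hausdorff and locally convex, for every nonzero vector there is a continuous seminorm $p$ on $E$ with $p(v) > 0$ (equivalently an absolutely convex $0$-neighborhood $U$ not containing $v$; this is the input from \cite[Chapter 36]{Treves}). Boundedness of $\bK v$ would give some $M > 0$ with $\sup_{\lambda \in \bK} p(\lambda v) \le M$, but $p(\lambda v) = |\lambda|\, p(v)$ is unbounded in $\lambda$ as soon as $p(v) > 0$. Hence every continuous seminorm vanishes on $v$, so $v = 0$ by Hausdorffness; thus $V = \{0\}$ and $\prescript{\mathrm{b}}{}\sT$ is separated.

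There is no real obstacle, only a small bookkeeping step: one must confirm that the notion of \emph{separated} convex bornology used in \cite{henri} — Mackey-closedness of $\{0\}$, equivalently $\{0\}$ being the only bounded subspace — is what appears in the statement. Once this is in hand, the two cited facts combine to give the claim in one line, which is exactly why the authors announce the lemma as ``one immediately obtains''. The parenthetical remark that the converse fails is not part of the statement and needs no argument here; it is witnessed by standard examples of non-Hausdorff locally convex topologies whose only bounded subspace is $\{0\}$.
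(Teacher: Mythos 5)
Your proof is correct and is precisely the ``immediate'' argument the paper compresses into its citations: the definition of separatedness as having no nonzero bounded subspace (\cite[2:11 Proposition (1)]{henri}), plus the standard fact that in a Hausdorff locally convex space a nonzero line $\bK v$ cannot be bounded, since some continuous seminorm satisfies $p(v)>0$ while $p(\lambda v)=\abso{\lambda}\,p(v)$ is unbounded. Nothing is missing, and your unpacking of the seminorm step matches what the paper's reference to \cite[Chapter 36]{Treves} is meant to supply.
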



\section{Tensor product of locally convex modules}\label{sec_tensprod}

\subsection{\texorpdfstring{Bornological and projective tensor product\\ of locally convex spaces}{Bornological and projective tensor product of locally convex spaces}}

We cite the following definitions of the tensor product of locally convex spaces (\cite[5.7]{KM}, \cite[Definition 43.2]{Treves}).
\begin{definition}Let $E,F$ be locally convex spaces.
\begin{enumerate}[(i)]
 \item The \emph{bornological tensor product} of $E$ and $F$ is the algebraic tensor product $E \otimes F$ of vector spaces equipped with the finest locally convex topology such that the canonical mapping $(x,y) \to x \otimes y$ from $E \times F$ into $E \otimes F$ is bounded. $E \otimes F$ with this topology is denoted by $E \otimes_\beta F$.
 \item The \emph{projective tensor product} of $E$ and $F$ is the algebraic tensor product $E \otimes F$ of vector spaces equipped with the finest locally convex topology such that the canonical mapping $(x,y) \to x \otimes y$ from $E \times F$ into $E \otimes F$ is continuous. $E \otimes F$ with this topology is denoted by $E \otimes_\pi F$.
\end{enumerate}
\end{definition}
Both $E \otimes_\beta F$ and $E \otimes_\pi F$ are Hausdorff (\cite[Section 15.1 Proposition 3]{Jarchow}). By Corollary \ref{borninftop_born} $E \otimes_\beta F$ is bornological. For any locally convex space $G$ there are bornological isomorphisms of locally convex spaces
\begin{equation}\label{bornisolb}
\Linb(E \otimes_\beta F, G) \cong \Linb(E,F;G ) \cong \Linb(E, \Linb(F,G))
\end{equation}
where the first isomorphism is given by the transpose of the canonical bilinear mapping $\otimes\colon E \times F \to E \otimes_\beta F$ and the second one by the exponential law \cite[5.7]{KM}. Consequently, a bilinear mapping $E \times F \to G$ is bounded if and only if the associated linear mapping $E \otimes_\beta F \to G$ is bounded. For the projective tensor product, however, the algebraic isomorphism of vector spaces (\cite[Proposition 43.4]{Treves})
\begin{equation}\label{bornisolc}
\Linc(E \otimes_\pi F, G) \cong \Linc(E,F;G)
\end{equation}
is not continuous and $\Linc(E,F;G)$ is not isomorphic to $\Linc(E,\Linc(F,G))$, in general,
but we have the universal property that a bilinear mapping $E \times F \to G$ is continuous if and only if the associated linear mapping $E \otimes_\pi F \to G$ is continuous.

\subsection{Vector space structures on rings and modules}

We will now define the notion of bounded resp.~locally convex algebra and module (cf.~\cite{capbracket}, \cite[Chapter 6]{Navarro}, and \cite[Chapter II]{capelle}).

Our notion of locally convex $A$-module will require $A$ to be a $\bK$-algebra. Let $R$ be a nonzero ring and $\iota\colon \bK \to R$ any mapping. Define the action of $\bK$ on $R$ (scalar multiplication) by the mapping $\bK \times R \to R$, $(\lambda, r) \mapsto \iota(\lambda) \cdot r$. This turns $R$ into a vector space over $\bK$ if and only if $\iota$ is a ring homomorphism. By \cite[I \S 9.1 Theorem 2]{Bourbaki} the subring $\iota(\bK)$ of $R$ then is a field and $\iota$ is an isomorphism of $\bK$ onto $\iota(\bK)$. Because $\bK$ is commutative $R$ is an associative unital algebra over $\bK$.

\begin{definition} We call a locally convex space $A$ over $\bK$ with a bilinear multiplication mapping $A \times A \to A$ a \emph{bounded algebra} or a \emph{locally convex algebra} over $\bK$, respectively, if this multiplication is bounded or continuous, respectively.
\end{definition}


\begin{definition}\label{lcmod}Let $A$ be a bounded (locally convex) algebra over $\bK$. A left $A$-module $M$ carrying a topology which is locally convex with respect to the vector space structure on $M$ induced by the subring $\bK \subseteq A$ is called a \emph{bounded (locally convex) left module} if module multiplication $A \times M \to M$ is bounded (continuous).
\end{definition}

The definition for right modules is analogous.

\begin{remark}One can also define a bounded (locally convex) left module $M$ over $A$ as a topological vector space $M$ with locally convex topology together with a $\bZ$-bilinear bounded (continuous) mapping $A \times M \to M$, $(a,m) \mapsto a \cdot m$ such that $a \cdot (b \cdot m) = (a b) \cdot m$ and $1 \cdot m = m$.
\end{remark}

\subsection{\texorpdfstring{Bornological and projective tensor product\\ of locally convex and bounded modules}{Bornological and projective tensor product of locally convex and bounded modules}}

We will from now on assume that the algebra $A$ contains $\bK$ as a subring of its center. This is necessary for the tensor product $M \otimes_A N$ of $A$-modules and the quotient $M \otimes_\bK N/J_0$ with $J_0$ as defined below to be vector spaces.

Let $A$ be a bounded algebra over $\bK$, $M$ a right bounded $A$-module and $N$ a left bounded $A$-module. Define $J_0$ as the sub-$\bZ$-module of $M \otimes_\bK N$ generated by all elements of the form $ma \otimes n - m \otimes an$ with $a \in A$, $m \in M$ and $n \in N$. The $\bK$-vector spaces $M \otimes_A N$ and $(M \otimes_\bK N) / J_0$ are isomorphic \cite[Theorem I.5.1]{capelle}, but in order to obtain a Hausdorff space we need to take the quotient with respect to the closure $J$ of $J_0$ in $M \otimes_\beta N$, which again is a sub-$\bZ$-module of $M \otimes_\beta N$. We define the $\bZ$-module quotient
\[ M \otimes_A^\beta N \coleq (M \otimes_\beta N) / J \]
which is a vector space because $\bK$ is contained in the center of $A$. It is endowed with the quotient topology, which is locally convex and Hausdorff. Denoting by $q$ the canonical mapping into the quotient we obtain a bilinear map
\[ \otimes_A^\beta \coleq q \circ \otimes\colon M \times N \to M \otimes_A^\beta N. \]

Similarly, if $A,M,N$ are taken to be locally convex instead of bounded, we denote the resulting space by $M \otimes_A^\pi N$ with corresponding mapping $\otimes_A^\pi$:
\begin{gather*}
M \otimes_A^\pi N \coleq (M \otimes_\pi N) / J \\
\otimes_A^\pi \coleq q \circ \otimes\colon M \times N \to M \otimes_A^\pi N.
\end{gather*}

\begin{definition}We call $M \otimes_A^\beta N$ the \emph{bornological tensor product} and $M \otimes_A^\pi N$ the \emph{projective tensor product} of $M$ and $N$ over $A$.
\end{definition}

By \cite[13.5 Prop.\ 1 (b)]{Jarchow} $M \otimes_A^\beta N$ is bornological. These spaces have the following universal properties.

\begin{proposition}\label{tpiso}Let $M$ be a right module over an algebra $A$, $N$ a left module over $A$ and $E$ a locally convex space. If $M$, $N$, and $A$ are locally convex then:
\begin{enumerate}
 \item[(i)] Given a continuous $\bK$-linear mapping $g\colon M \otimes_A^\pi N \to E$ the mapping $f \coleq g \circ \otimes_A^\pi$ is continuous, $\bK$-bilinear and $A$-balanced.
 \item[(ii)] Given a continuous $A$-balanced $\bK$-bilinear mapping $f\colon M \times N \to E$ there exists a unique continuous $\bK$-linear mapping $g\colon M \otimes_A^\pi N \to E$ such that $f = g \circ \otimes_A^\pi$.
\end{enumerate}
This gives an algebraic vector space isomorphism 
\begin{equation}\label{oktober_beta}
\Linc(M \otimes_A^\pi N, E) \cong \Lin^{A,c}(M,N;E).
\end{equation}
If $M$, $N$, and $A$ are bounded then:
\begin{enumerate}
 \item[(iii)] Given a bounded $\bK$-linear mapping $g\colon M \otimes_A^\beta N \to E$ the mapping $f \coleq g \circ \otimes_A^\beta$ is bounded, $\bK$-bilinear, and $A$-balanced.
 \item[(iv)] Given a bounded $A$-balanced $\bK$-bilinear mapping $f\colon M \times N \to E$ there exists a unique bounded $\bK$-linear mapping $g\colon M \otimes_A^\beta N \to E$ such that $f = g \circ \otimes_A^\beta$.
\end{enumerate}
This gives bornological vector space isomorphisms
\begin{equation}\label{oktober_alpha}
\begin{aligned}
 \Linb(M \otimes_A^\beta N, E)  & \cong \Lin^{A,b}(M,N;E) \\
& \cong \Linb_A(M, \Linb(N, E)) \cong \Linb_A(N, \Linb(M, E))
\end{aligned}
\end{equation}
\end{proposition}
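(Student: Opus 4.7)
The plan is to reduce everything to the non-module universal properties \eqref{bornisolc} and \eqref{bornisolb} via the quotient definition $M \otimes_A^\bullet N = (M \otimes_\bullet N)/J$ for $\bullet \in \{\pi,\beta\}$, using the passage to the quotient by $J$ to absorb the $A$-balanced condition.

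For parts (i) and (iii), the composition $g \circ \otimes_A^\bullet = g \circ q \circ \otimes$ is continuous (resp.\ bounded) as a composition of such maps; $\bK$-bilinearity is inherited from $\otimes$; and the $A$-balanced identity holds because $ma \otimes n - m \otimes an \in J_0 \subseteq J = \ker q$. For parts (ii) and (iv), I would start with a continuous (resp.\ bounded) $A$-balanced bilinear $f$ and apply the universal property of $\otimes\colon M \times N \to M \otimes_\bullet N$ (via \eqref{bornisolc} resp.\ \eqref{bornisolb}) to obtain a unique continuous (bounded) $\bK$-linear $\tilde f\colon M \otimes_\bullet N \to E$ with $\tilde f \circ \otimes = f$. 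The $A$-balanced condition then forces $\tilde f|_{J_0} = 0$. Since $E$ is Hausdorff and $\tilde f$ is continuous (in the $\pi$ case directly; in the $\beta$ case because $M \otimes_\beta N$ is bornological by Corollary \ref{borninftop_born}, so any bounded linear map out of it is continuous), the kernel of $\tilde f$ is closed and therefore contains $J = \overline{J_0}$. The universal property of the quotient then gives a unique continuous (bounded) linear $g\colon M \otimes_A^\bullet N \to E$ with $g \circ q = \tilde f$, whence $g \circ \otimes_A^\bullet = f$; uniqueness follows from surjectivity of $q$.

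For the isomorphisms, the algebraic iso \eqref{oktober_beta} is then immediate. For \eqref{oktober_alpha}, the pullback $q^*$ identifies $\Linb(M \otimes_A^\beta N, E)$ with the subspace of $\Linb(M \otimes_\beta N, E)$ consisting of maps vanishing on $J$, which by the bornological-ness of $M \otimes_\beta N$ equals the subspace of maps vanishing on $J_0$; under \eqref{bornisolb} this transports to $\Lin^{A,b}(M,N;E)$ carrying, by definition, the subspace bornology of $\Linb(M,N;E)$. That this identification preserves bornologies I would verify by noting that $q$ is a bornological quotient (bounded sets in $M \otimes_A^\beta N$ are covered by $q$-images of bounded sets in $M \otimes_\beta N$, since $M \otimes_\beta N$ is bornological), so the topology of uniform convergence on bounded sets on either side corresponds. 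The remaining isos $\Lin^{A,b}(M,N;E) \cong \Linb_A(M, \Linb(N,E)) \cong \Linb_A(N, \Linb(M,E))$ follow from the exponential-law part of \eqref{bornisolb}: an $A$-balanced bounded bilinear map on $M \times N$ corresponds to a bounded $\bK$-linear map $M \to \Linb(N,E)$ that is $A$-linear under the natural $A$-action $(a\varphi)(n) \coleq \varphi(an)$ on $\Linb(N,E)$, and symmetry yields the last iso.

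The main obstacle is the bounded case of the factorization step: ensuring $\tilde f$ vanishes on the closure $J$ and not merely on $J_0$ forces us to upgrade boundedness to continuity, which works only because $M \otimes_\beta N$ is bornological. A secondary technical point is matching the bornologies in \eqref{oktober_alpha}, for which the description of the bornology of $M \otimes_A^\beta N$ as a bornological quotient of $M \otimes_\beta N$ is essential.
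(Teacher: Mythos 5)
Your proposal is correct and takes essentially the same route as the paper's proof: reduce to the universal properties \eqref{bornisolb} and \eqref{bornisolc}, use that $M \otimes_\beta N$ is bornological to upgrade boundedness of $\tilde f$ to continuity so that it vanishes on $J = \overline{J_0}$ rather than merely on $J_0$, and then transport the bornologies through the quotient. Your inline verification that $q$ is a bornological quotient (bounded sets in $M \otimes_A^\beta N$ covered by images of bounded sets) is precisely the content of the paper's Lemma \ref{bornquot}, which the paper cites at the corresponding step instead of rederiving it.
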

\begin{proof}
(i) and (iii) are trivial. For (ii) and (iv) we obtain from \eqref{bornisolb} and \eqref{bornisolc} a unique mapping $\tilde f$ in $\Linc(M \otimes_\pi N, G)$ or $\Linb(M \otimes_\beta N, G)$ such that $f = \tilde f \circ \otimes$. Noting that $M \otimes_\beta N$ is bornological, $\tilde f$ is continuous in both cases and thus vanishes on $J$, whence there exists a unique linear mapping $g$ from $M \otimes_A^\pi N$ (or $M \otimes_A^\beta N$) into $E$ such that $f = g \circ q \circ \otimes$, which means $g \circ \otimes_A^\pi$ or $f = g \circ \otimes_A^\beta$, respectively. Clearly $g$ is continuous (bounded) by definition. It is furthermore easily verified that the correspondence $f \leftrightsquigarrow g$ is a vector space isomorphism. Finally, the isomorphisms of \eqref{bornisolb} are easily seen to restrict to bornological isomorphisms
\begin{align*}
\Lin^{A,b}(M,N;E) & \cong \{\,T \in \Linb(M \otimes_\beta N, E): J_0 \subseteq \ker T\,\} \\
& \cong \Linb_A(M, \Linb(N, E)) \cong \Linb_A(N, \Linb(M, E))
\end{align*}
Together with Lemma \ref{bornquot} this gives the result (we can replace $J_0$ by $J$ because $M \otimes_\beta N$ is bornological).

\end{proof}

\begin{lemma}\label{bornquot}Let $E$ be a bornological locally convex space, $N$ a closed subspace of $E$ and $F$ an arbitrary locally convex space. Then there is a bornological isomorphism
\[ \Linb(E/N, F) \cong \{ T \in \Linb(E,F): N \subseteq \ker T \} \]
where the latter space is equipped with the subspace topology.
\end{lemma}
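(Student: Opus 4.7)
The natural candidate is the map $\phi \colon \Linb(E/N, F) \to X$, where $X \coleq \{T \in \Linb(E,F) : N \subseteq \ker T\}$ denotes the right-hand side, defined by $\phi(S) \coleq S \circ q$ with $q \colon E \to E/N$ the canonical quotient. The plan is to show that $\phi$ is a $\bK$-linear bijection bounded in both directions.

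Well-definedness and injectivity follow from continuity and surjectivity of $q$. For surjectivity, given $T \in X$ the algebraic universal property of the quotient vector space produces a unique $\bK$-linear $S \colon E/N \to F$ with $T = S \circ q$; here bornologicality of $E$ enters essentially: $T$ is bounded and $E$ is bornological, hence $T$ is continuous, so $S$ is continuous on $E/N$ by the universal property of the quotient topology and in particular bounded. Boundedness of $\phi$ is the easy direction: for a bounded family $\Phi \subseteq \Linb(E/N, F)$ and a bounded $C \subseteq E$, the set $q(C)$ is bounded in $E/N$, so $\bigcup_{S \in \Phi}(S \circ q)(C) = \bigcup_{S \in \Phi}S(q(C))$ is bounded in $F$, whence $\phi(\Phi)$ is bounded in $\Linb(E, F)$ and hence in the subspace $X$.

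The main obstacle is boundedness of $\phi^{-1}$, which reduces to the following lifting statement: every bounded $D \subseteq E/N$ is contained in $q(B)$ for some bounded $B \subseteq E$. Granting the lifting, if $\Psi \subseteq X$ is bounded in the subspace bornology (i.e., equibounded on bounded subsets of $E$) and $D \subseteq E/N$ is bounded, one chooses $B$ bounded with $D \subseteq q(B)$ and obtains $\bigcup_{S \in \phi^{-1}(\Psi)}S(D) \subseteq \bigcup_{T \in \Psi}T(B)$, which is bounded in $F$, so $\phi^{-1}(\Psi)$ is bounded in $\Linb(E/N, F)$. To prove the lifting, I would first observe that quotients of bornological spaces are bornological (for bounded linear $f \colon E/N \to G$, $f \circ q$ is bounded and hence continuous by bornologicality of $E$, forcing $f$ to be continuous). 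Applying Lemma \ref{borninftop} to the single map $q$, combined with the fact that bornologicality of $E$ makes the finest locally convex topology on $E/N$ for which $q$ is bounded coincide with the quotient topology, identifies the quotient topology with $\prescript{\mathrm{t}}{}\sB_f$, where $\sB_f$ is the final convex bornology on $E/N$ generated by the images $q(B)$ of bounded sets in $E$; by Corollary \ref{borninftop_born} this topology is bornological, and the lifting then reduces to identifying $\sB_f$ with the von Neumann bornology of the quotient topology. This last identification is the technical heart of the argument and the place where the hypothesis that $E$ is bornological is used most essentially.
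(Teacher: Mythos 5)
Your overall architecture coincides with the paper's: the correspondence $S \leftrightsquigarrow S \circ q$, the use of bornologicality of $E$ to upgrade a bounded $T$ to a continuous one so that it factors continuously through the quotient, and the easy direction of boundedness (since $q$ maps bounded sets to bounded sets) are all exactly as in the paper. The genuine gap is precisely where you flag it, and it is worse than a missing technicality: the identification you defer --- that the final convex bornology $\sB_f$ generated by the sets $q(B)$, $B \subseteq E$ bounded, coincides with the von Neumann bornology of the quotient topology --- is not just unproved but \emph{false} in general, even for $E$ Fr\'echet (hence bornological). By a classical example of K\"othe--Grothendieck there is a Fr\'echet--Montel space $E$ with a closed subspace $N$ such that $E/N$ is topologically isomorphic to $\ell^1$. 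Bounded subsets of a Montel space are relatively compact, so every $q(B)$ is relatively compact in $\ell^1$; the closed unit ball of $\ell^1$ is von Neumann bounded but not relatively compact, hence contained in no $q(B)$. Thus bornologicality of $E$ cannot deliver your lifting statement for the von Neumann bornology of the quotient topology: one always has $\sB_f$ contained in $\prescript{\mathrm{b}}{}{(\prescript{\mathrm{t}}{}\sB_f)}$, and the inclusion can be strict. Your reduction via Lemma \ref{borninftop} and Corollary \ref{borninftop_born} is correct as far as it goes (the quotient topology is indeed the finest locally convex topology making $q$ bounded, and equals $\prescript{\mathrm{t}}{}\sB_f$), but the ``technical heart'' you hoped to supply does not exist.

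It is instructive to see how the paper handles the same step: in the hard direction it chooses, for a bounded $\tilde D \subseteq E/N$, a bounded $D \subseteq E$ with $\tilde D \subseteq p(D)$, justified by citing \cite[2:7]{henri}, i.e., by the statement that images of bounded subsets of $E$ form a \emph{basis} of the bornology of $E/N$. That statement is definitional for Hogbe-Nlend's quotient bornology; it is not a theorem about the von Neumann bornology of the quotient topology, as the example above shows. In other words, the paper's proof implicitly reads ``bounded in $E/N$'' in the quotient-bornology sense, under which your lifting step is trivial and the remainder of your argument (equiboundedness phrased via the sets $\{\,T : T(D) \subseteq V\,\}$ and scalars $\lambda$, exactly as in the paper) goes through verbatim. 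So your proposal can be completed only after this reinterpretation of the bornology on $E/N$; as written, with von Neumann boundedness, the plan fails at the step you yourself identified as the crux.
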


\begin{proof}Denote by $p\colon E \to E/N$ the canonical projection. As to the algebraic part, for $\tilde T \in \Lin^b(E/N, F)$ the mapping $T \coleq \tilde T \circ p$ is in $\Linb(E,F)$ and vanishes on $N$; conversely, given such $T$ there exists a unique linear mapping $\tilde T$ such that $T = \tilde T \circ p$. Now $T$ is continuous (equivalently bounded) if and only if $\tilde T$ is (\cite[Proposition 4.6]{Treves}). The correspondences $T \leftrightsquigarrow \tilde T$ are inverse to each other and linear because the transpose $p^*$ of $p$ is linear.

For boundedness of $p^*$ let $\tilde B \subseteq L^b(E/N, F)$ be bounded and set $B \coleq p^*(\tilde B)$. Let $D \subseteq E$ be bounded and $V$ be a $0$-neighborhood in $F$. Then $\tilde D \coleq p(D)$ is bounded in $E/N$ so there exists $\lambda>0$ such that
\begin{align*}
 \tilde B &\subseteq \lambda \cdot\{\,\tilde T \in \Linb(E/N, F): \tilde T(\tilde D) \subseteq V \,\}
\intertext{and thus}
B &\subseteq \lambda \cdot \{\, p^*(\tilde T): \tilde T \in \Linb(E/N, F),\tilde T(\tilde D) \subseteq V \,\} \\
 &= \lambda \cdot \{\, T \in \Linb(E, F): N \subseteq \ker T, T(D) \subseteq V\, \}.
\end{align*}
which implies that $B$ is bounded. Conversely, let $B \subseteq \{T \in \Linb(E,F): N \subseteq \ker T \}$ be bounded and set $\tilde B \coleq (p^*)^{-1}(B) \subseteq \Linb(E/N, F)$. Let $\tilde D \subseteq E/N$ be bounded and $V$ a $0$-neighborhood in $F$. Because the images of bounded subsets of $E$ form a basis of the bornology
of $E/N$ (\cite[2:7]{henri}) there exists a bounded set $D \subseteq E$ such that $\tilde D \subseteq p(D)$. By assumption there is $\lambda>0$ such that
\begin{align*}
 B &\subseteq \lambda \cdot \{ T \in \Linb(E,F): N \subseteq \ker T, T(D) \subseteq V \} \\
\intertext{and thus}
\tilde B &\subseteq \lambda \cdot \{ \, (p^*)^{-1}(T): T \in \Linb(E,F), N \subseteq \ker T, T(D) \subseteq V\,\} \\
& \subseteq \lambda \cdot \{\, \tilde T \in \Linb(E/N, F): \tilde T(\tilde D) \subseteq V \,\}.\qedhere
\end{align*}
\end{proof}

We remark that the tensor product can also be constructed in a different way. Remember that as $\bK$ is in the center of $A$ $E \otimes_A F$ has a canonical vector space structure (\cite[II \S 3.6 Remark (2)]{Bourbaki}). For the following Lemma the separated vector bornology associated with a vector bornology is defined as the quotient bornology with respect to the Mackey closure $\overline{\{0\}}^\mathrm{b}$ of $\{0\}$ (\cite[2:12 Definition (2)]{henri}).

\begin{lemma}\label{twoway}Let $M$ be a right module and $N$ a left module over an algebra $A$. Then
\begin{enumerate}
 \item[(i)] If $M$, $N$ and $A$ are locally convex the Hausdorff space associated with the algebraic tensor product $M \otimes_A N$ endowed with the inductive locally convex topology with respect to the canonical mapping $\otimes\colon M \times N \to M \otimes_A N$ is homeomorphic to $M \otimes_A^\pi N$.
\item[(ii)] If $M$, $N$, and $A$ are bounded the separated bornological vector space associated with the algebraic tensor product $M \otimes_A N$ endowed with the final convex bornology with respect to the canonical mapping $\otimes$ is bornologically isomorphic to $M \otimes_A^\beta N$.
\end{enumerate}
\end{lemma}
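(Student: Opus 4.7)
The plan is to show, for each part, that both sides of the claimed isomorphism co-represent the same universal functor and then extract the canonical mutually inverse maps from the uniqueness clauses.

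For (i), recall from Proposition~\ref{tpiso}(i)--(ii) that $M \otimes_A^\pi N$ co-represents the functor $E \mapsto \Lin^{A,c}(M,N;E)$ on Hausdorff locally convex spaces, via $g \mapsto g \circ \otimes_A^\pi$. Write $\tau$ for the inductive locally convex topology on $M \otimes_A N$ with respect to $\otimes \colon M \times N \to M \otimes_A N$, and let $H$ denote the associated Hausdorff locally convex space. By Lemma~\ref{l2}, continuous linear maps $(M \otimes_A N, \tau) \to E$ correspond, via precomposition with $\otimes$, to continuous $\bK$-bilinear maps $M \times N \to E$ factoring through $M \otimes_A N$; these are precisely the continuous $A$-balanced $\bK$-bilinear maps. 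For Hausdorff $E$, such maps further descend uniquely through the Hausdorff quotient, so $H$ co-represents the same functor. Applying each universal property with the other space as target produces canonical continuous linear maps $M \otimes_A^\pi N \to H$ and $H \to M \otimes_A^\pi N$, and the uniqueness clauses force them to be mutually inverse.

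For (ii), the argument is structurally identical: replace \emph{continuous} by \emph{bounded}, Lemma~\ref{l2} by the universal property of the final convex bornology proved right after Lemma~\ref{borngen}, the Hausdorff quotient by the separated bornological quotient, and Proposition~\ref{tpiso}(i)--(ii) by Proposition~\ref{tpiso}(iii)--(iv). The separated quotient of a convex bornological vector space has the expected dual universal property: any bounded linear map into a separated target factors uniquely through it.

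The main obstacle is aligning the two universal properties in part (ii). Namely, $M \otimes_A^\beta N = (M \otimes_\beta N)/J$ is built by quotienting out the topological closure $J$ of $J_0$, whereas the separated quotient on the other side only imposes vanishing on the Mackey closure of $\{0\}$. The key observation, already used at the end of the proof of Proposition~\ref{tpiso}, is that since $M \otimes_\beta N$ is bornological (Corollary~\ref{borninftop_born}), any bounded linear map out of it into a separated target is continuous, hence vanishing on $J_0$ automatically forces vanishing on $J$; this makes the two universal properties agree on separated targets and yields the bornological isomorphism.
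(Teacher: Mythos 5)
Your proposal is correct and takes essentially the same route as the paper: the paper likewise produces the two canonical maps from the respective universal properties (Proposition \ref{tpiso} on one side; Lemma \ref{l2}, respectively the final-bornology proposition, combined with factorization through the Hausdorff, respectively separated, quotient on the other) and verifies that they are mutually inverse on the generating image of $\otimes$, which is precisely your uniqueness-of-corepresenting-object argument in explicit form. Your ``key observation'' resolving the $J$ versus $J_0$ discrepancy in (ii) --- that boundedness out of the bornological space $M \otimes_\beta N$ upgrades to continuity and hence forces vanishing on the closure $J$ --- is exactly the device the paper already employs inside the proof of Proposition \ref{tpiso}(iv), as you note.
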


\begin{proof}
(i)
Let $p\colon M \otimes_A N \to (M \otimes_A N) / \overline{\{0 \}}$ denote the canonical projection onto the quotient space.
\[
 \xymatrix{
**[l]M \times N \ar[d]_{\otimes_A^\pi} \ar[r]^\otimes & **[r] M \otimes_A N \ar[d]^p \ar[dl]_{\tilde g}\\
**[l]M \otimes_A^\pi N \ar@<0.5ex>[r]^-f & **[r] \ar@<0.5ex>[l]^-g (M \otimes_A N) / \overline{\{ 0 \}} \\
}
\]
Let $f$ be the continuous linear mapping induced by the continuous $A$-balanced $\bK$-bilinear mapping $p \circ \otimes$. $\otimes_A^\pi$ induces a continuous linear mapping $\tilde g$, which is continuous (and thus its kernel contains the closure of $\{0\}$); hence there exists a linear continuous mapping $g$ with $g \circ p = \tilde g$. In order to see that $f$ and $g$ are inverse to each other, we note that as $p$ is surjective and the images of $\otimes$ resp.\ $\otimes_A^\pi$ generate $M \otimes_A N$ resp.\ $M \otimes_A^\pi N$ it suffices to have the identities
\begin{gather*}
 f \circ g \circ p \circ \otimes = f \circ \otimes_A^\pi = p \circ \otimes \\
g \circ f \circ \otimes_A^\pi = g \circ p \circ \otimes = \otimes_A^\pi.
\end{gather*}

(ii) Replace $\overline{\{0\}}$ by $\overline{\{0\}}^\textrm{b}$, $\otimes_A^\pi$ by $\otimes_A^\beta$ and ``continuous'' by ``bounded'' in the proof of (i). Apply Lemma \ref{hausdorffsepp} to see that $M \otimes_A^\beta N$ is a separated bornological space and use \cite[2:12 Proposition (2)]{henri} for obtaining $g$.
\end{proof}

If $A$ is commutative $M \otimes_A^\beta N$ and $M \otimes_A^\pi N$ have a canonical $A$-module structure given by $a \cdot (m \otimes_A^\pi n) \coleq (ma) \otimes_A^\pi n$, which is bounded or locally convex, respectively:

\begin{proposition}If $A$ is commutative then $M \otimes_A^\beta N$ is a bounded $A$-module and $M \otimes_A^\pi N$ a locally convex $A$-module.
\end{proposition}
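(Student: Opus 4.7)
The plan is to define the $A$-action first on the unquotiented tensor product $M \otimes_\star N$ (with $\star$ standing for either $\beta$ or $\pi$) and then descend to $M \otimes_A^\star N = (M \otimes_\star N)/J$ by exploiting commutativity of $A$. To set up the action on $M \otimes_\star N$ I consider
\begin{align*}
\phi\colon A \times M \times N \to M \otimes_\star N, \quad (a,m,n) \mapsto (ma) \otimes n.
\end{align*}
Since module multiplication $M \times A \to M$ is bounded (respectively continuous) and $\otimes$ is bounded (respectively continuous), $\phi$ is bounded (respectively continuous) trilinear. Combining the universal property of $\otimes_\star$ with the associativity $A \otimes_\star (M \otimes_\star N) \cong A \otimes_\star M \otimes_\star N$ then produces a bounded (resp.\ continuous) bilinear map $\rho\colon A \times (M \otimes_\star N) \to M \otimes_\star N$ characterised on elementary tensors by $\rho(a, m \otimes n) = (ma) \otimes n$.

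Next I would check that $\rho$ respects the $A$-balance ideal: using commutativity,
\begin{align*}
\rho(a, mb \otimes n - m \otimes bn) &= (mb)a \otimes n - (ma) \otimes bn \\
 &= (ma)b \otimes n - (ma) \otimes bn \in J_0,
\end{align*}
so $\rho(\{a\} \times J_0) \subseteq J_0$, and since $\rho(a,\cdot)$ is continuous (bounded) this extends to $\rho(\{a\} \times J) \subseteq J$. Composition with the quotient map $q\colon M \otimes_\star N \to M \otimes_A^\star N$ therefore yields a well-defined bilinear map $\mu\colon A \times M \otimes_A^\star N \to M \otimes_A^\star N$ with $\mu(a, q(x)) = q(\rho(a, x))$, which on elementary tensors is the desired scalar multiplication $a \cdot (m \otimes_A^\star n) = (ma) \otimes_A^\star n$.

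The last step is to deduce boundedness (continuity) of $\mu$ by showing that $\mathrm{id}_A \times q$ is a bornological (topological) quotient map and then exploiting $q \circ \rho = \mu \circ (\mathrm{id}_A \times q)$. In the $\beta$ case this is essentially by definition: the product bornology on $A \times (M \otimes_\beta N)$ has products of bounded sets as a base and by Lemma \ref{borngen} the quotient bornology on $M \otimes_A^\beta N$ is generated by images of bounded subsets of $M \otimes_\beta N$. In the $\pi$ case, $q$ is an open continuous surjection because $J$ is a closed subspace of the locally convex space $M \otimes_\pi N$, and the product of an open continuous surjection with an identity map remains an open continuous surjection, hence a topological quotient.

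The main obstacle is the $\pi$ case: since the projective tensor product does not satisfy an exponential law for continuous mappings (contrast with \eqref{bornisolb}), one cannot simply curry to reduce continuous bilinearity of $\mu$ to continuity of the family $a \mapsto \mu_a$, which is why I construct $\rho$ on the unquotiented tensor product and then apply the quotient-map argument to obtain $\mu$; this routes both cases through the same argument.
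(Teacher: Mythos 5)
Your argument is correct in substance, but it takes a genuinely different route from the paper, which for this proposition offers no proof at all: it simply cites \cite[5.21]{KM} for the bounded case and \cite[Proposition II.2.2]{capelle} or \cite[Section 6.2]{Navarro} for the continuous case. Your construction --- linearize the bounded (continuous) trilinear map $(a,m,n) \mapsto (ma) \otimes n$ to get $\rho$ on the unquotiented tensor product, verify $J_0$-invariance via commutativity of $A$, upgrade to $J$-invariance, and descend along $q$ --- is a self-contained proof inside the paper's own toolkit, and your closing observation is exactly the right diagnosis: since $\otimes_\pi$ has no exponential law (contrast \eqref{bornisolb} with \eqref{bornisolc}), the $\pi$ case cannot be curried, and your quotient-map detour handles it correctly, including the key point that $\mathrm{id}_A \times q$ is open (products of quotient maps are not quotients in general, but products of open maps are open, and open continuous surjections are quotient maps). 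Two repairs are needed, though both are cosmetic relative to the paper's own conventions. First, in the $\beta$ case the passage from $\rho(\{a\} \times J_0) \subseteq J_0$ to $\rho(\{a\} \times J) \subseteq J$ requires continuity, not mere boundedness, of $\rho(a,\cdot)$; you should say explicitly that $M \otimes_\beta N$ is bornological (Corollary \ref{borninftop_born}), so the bounded linear map $\rho(a,\cdot)$ is continuous --- the same device used in the proof of Proposition \ref{tpiso}. Second, your appeal to Lemma \ref{borngen} for ``bounded subsets of the quotient are images of bounded subsets'' is misplaced: Lemma \ref{borngen} describes generated bornologies, whereas the fact you need is the lifting property invoked in the proof of Lemma \ref{bornquot} via \cite[2:7]{henri}. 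This is not an innocent point, since for the von Neumann bornology of a quotient topology bounded sets need not lift in general; you are implicitly adopting the same identification of the quotient's bornology with the quotient bornology that the paper itself uses in Lemma \ref{bornquot}, and with that citation your $\beta$-case argument (bounded sets of $A \times (M \otimes_\beta N)$ sit in products of bounded sets, then push forward along $q \circ \rho = \mu \circ (\mathrm{id}_A \times q)$) goes through. In short: what your approach buys is an explicit, reference-free proof that makes visible exactly where commutativity, bornologicity of $\otimes_\beta$, and openness of $q$ enter; what the paper's citation buys is brevity, at the cost of hiding precisely these subtleties in the literature.
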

\begin{proof}
 For the bounded case see \cite[5.21]{KM}, for the continuous case \cite[Proposition II.2.2]{capelle} or \cite[Section 6.2]{Navarro}. 
\end{proof}

\begin{corollary}\label{bornisoalin}
If $A$ is commutative then the isomorphisms \eqref{oktober_beta} and \eqref{oktober_alpha} induce, for any locally convex $A$-modules $M$, $N$ and $P$, an algebraic isomorphism
\[ \Linc_A(M,N; P) \cong \Linc_A(M \otimes_A^\pi N, P). \]
and, for bounded $A$-modules $M$, $N$, and $P$, bornological isomorphisms
\begin{align*}
 \Linb_A(M \otimes_A^\beta N, P) & \cong \Linb_A(M,N; P) \\
 &\cong \Linb_A(M, \Linb_A(N, P)) \cong \Linb_A(N, \Linb_A(M, P)).
\end{align*}
\end{corollary}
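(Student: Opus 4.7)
The plan is to verify that the isomorphisms of Proposition \ref{tpiso} restrict cleanly to the $A$-linear subspaces on each side, using commutativity of $A$ in a crucial way. First I would recall from Proposition \ref{tpiso} that the correspondence $f \leftrightsquigarrow g$ via $f = g \circ \otimes_A^\pi$ (resp.\ $g \circ \otimes_A^\beta$) is already a $\bK$-vector space isomorphism between $\Linc(M \otimes_A^\pi N, P)$ and $\Lin^{A,c}(M,N;P)$ (resp.\ the bounded analogue), and in the bounded case it is even a bornological isomorphism. Since $A$ is commutative, the canonical $A$-module structure on $M \otimes_A^\pi N$ satisfies $a \cdot (m \otimes_A^\pi n) = (ma) \otimes_A^\pi n = m \otimes_A^\pi (an)$, so $g$ is $A$-linear iff $g((ma)\otimes_A^\pi n) = a\,g(m \otimes_A^\pi n)$ for all $a,m,n$, which translates precisely to $f(ma,n) = a\,f(m,n)$. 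Combined with the $A$-balancedness of $f$ (which gives $f(ma,n) = f(m,an)$), this yields $f(m,an) = a\,f(m,n)$ as well, so $f$ is $A$-bilinear in the sense required by $\Linc_A(M,N;P)$. Conversely, any such $A$-bilinear $f$ gives an $A$-linear $g$. This identifies the algebraic subspaces on both sides and thereby yields the first claimed isomorphism.

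For the bornological claims I would next observe that $\Linb_A(M \otimes_A^\beta N, P)$, $\Linb_A(M,N;P)$, and the iterated mapping spaces are by definition equipped with the subspace topology (equivalently, the subspace bornology) from their ambient $\Linb$ spaces. Since the isomorphism of \eqref{oktober_alpha} is bornological, restricting it to subspaces that correspond to each other under the same formula gives automatically a bornological isomorphism $\Linb_A(M \otimes_A^\beta N, P) \cong \Linb_A(M,N;P)$.

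For the remaining two isomorphisms, I would reuse the exponential law from \eqref{bornisolb}: a bounded bilinear $f : M \times N \to P$ corresponds to $\phi \in \Linb(M, \Linb(N,P))$ via $\phi(m)(n) = f(m,n)$. Under this correspondence, $A$-linearity of $f$ in the first argument is equivalent to $\phi$ being $A$-linear as a map $M \to \Linb(N,P)$, while $A$-linearity of $f$ in the second argument is equivalent to each $\phi(m)$ being $A$-linear on $N$, i.e.\ $\phi(m) \in \Linb_A(N,P)$. Thus $\Linb_A(M,N;P)$ corresponds exactly to $\Linb_A(M,\Linb_A(N,P))$, and symmetrically to $\Linb_A(N,\Linb_A(M,P))$; each of these identifications is a bornological isomorphism because the original exponential law is.

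The main (mild) obstacle is simply checking that the subspaces $\Linb_A(N,P) \subseteq \Linb(N,P)$ are preserved under the identifications, i.e.\ that $\phi$ takes values in $\Linb_A(N,P)$ iff the corresponding $f$ is $A$-linear in the second slot; this is purely formal given commutativity of $A$, but it is the one place where the commutativity assumption is visibly used, so I would state it explicitly. No further estimates are needed since all bornologies in question are subspace bornologies of spaces already known to match bornologically.
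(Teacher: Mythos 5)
Your proposal is correct and matches the paper's intent exactly: the paper gives no explicit proof of Corollary \ref{bornisoalin}, presenting it as immediate from \eqref{oktober_beta} and \eqref{oktober_alpha}, and your argument --- restricting the correspondences $f \leftrightsquigarrow g$ and the exponential law to the $A$-linear subspaces, with commutativity used to pass between $A$-balancedness plus $A$-linearity in one slot and full $A$-bilinearity --- is precisely the intended restriction argument. Your observation that the subspace topologies (hence bornologies) are inherited, so that the bornological isomorphisms restrict automatically, is the right justification and requires no further estimates.
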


\begin{proposition}\label{tpfcont}Let $f\colon M \to M'$ and $g\colon N \to N'$ be bounded (continuous) $A$-linear mappings between bounded (locally convex) $A$-modules. Then $f \otimes g$ is bounded (continuous).
\end{proposition}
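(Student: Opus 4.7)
The plan is to reduce the statement to the universal property of the tensor product of modules given in Proposition \ref{tpiso}. Concretely, I would consider the composite
\[ \varphi \coleq \otimes_A^\beta \circ (f \times g) \colon M \times N \to M' \otimes_A^\beta N', \qquad (m,n) \mapsto f(m) \otimes_A^\beta g(n), \]
(with $\otimes_A^\pi$ in place of $\otimes_A^\beta$ in the locally convex case) and show that $\varphi$ is a bounded (resp.\ continuous) $A$-balanced $\bK$-bilinear map. The universal property then yields a unique bounded (resp.\ continuous) $\bK$-linear map $h\colon M\otimes_A^\beta N \to M'\otimes_A^\beta N'$ with $h\circ\otimes_A^\beta = \varphi$, and by inspection on simple tensors $h = f\otimes g$.

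The three verifications to carry out on $\varphi$ are routine. First, $\bK$-bilinearity is immediate from the $\bK$-linearity of $f,g$ and the $\bK$-bilinearity of $\otimes_A^\beta$ (resp.\ $\otimes_A^\pi$). Second, boundedness (resp.\ continuity) of $\varphi$ follows by decomposition: the product map $f\times g\colon M\times N\to M'\times N'$ is bounded (continuous) because bounded (open) sets in a product are controlled coordinatewise, and $\otimes_A^\beta$ is bounded (resp.\ $\otimes_A^\pi$ continuous) by construction. Third, the $A$-balancing is the one point where the hypothesis that $f,g$ are $A$-linear (and not merely $\bK$-linear) enters:
\[ \varphi(ma,n) = f(ma)\otimes_A^\beta g(n) = (f(m)a)\otimes_A^\beta g(n) = f(m)\otimes_A^\beta (a\,g(n)) = f(m)\otimes_A^\beta g(an) = \varphi(m,an), \]
where the second and fourth equalities use $A$-linearity of $f$ and $g$, and the middle equality uses the $A$-balancing of $\otimes_A^\beta$ on $M'\times N'$.

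With these three properties established, Proposition \ref{tpiso}(iv) in the bounded case (resp.\ \ref{tpiso}(ii) in the locally convex case) produces the desired bounded (resp.\ continuous) factorisation, finishing the proof. No real obstacle is anticipated; the argument is a direct application of the universal property once $\varphi$ has been assembled. The only point worth emphasising is that $A$-balancing, rather than mere $\bK$-bilinearity, is what requires the module-level universal property of Proposition \ref{tpiso} in lieu of the simpler $\bK$-level universal properties \eqref{bornisolb} and \eqref{bornisolc}.
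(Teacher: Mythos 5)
Your proposal is correct and is essentially the paper's own proof: the paper likewise observes that $(m,n) \mapsto f(m) \otimes g(n)$ is an $A$-balanced (the paper says ``$A$-bilinear'') bounded (continuous) map into $M' \otimes_A^\beta N'$ resp.\ $M' \otimes_A^\pi N'$ and invokes the universal property of Proposition \ref{tpiso} to factor it through $\otimes_A^\beta$ resp.\ $\otimes_A^\pi$. Your write-up merely spells out the three verifications (bilinearity, boundedness/continuity via the composite $\otimes \circ (f \times g)$, and the balancing computation using $A$-linearity of $f$ and $g$) that the paper leaves implicit.
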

\begin{proof}Because the mapping $(m,n) \mapsto f(m) \otimes g(n)$ from $M \times N$ into $M' \otimes_A N'$ is $A$-bilinear and bounded (continuous) the corresponding $A$-linear mapping $f \otimes g$ from $M \otimes_A^\beta N$ to $M' \otimes_A^\beta N'$ (from $M \otimes_A^\pi N$ to $M' \otimes_A^\pi N')$ such that $(f \otimes g) (m \otimes n) = f(m) \otimes g(n)$ is bounded (continuous).
\end{proof}

The following is an analogue of \cite[Proposition 5.8]{KM}, telling us when the bounded and projective tensor product are identical.

\begin{lemma}\label{topgleich}If every bounded bilinear mapping on $M \times N$ into an arbitrary locally convex space is continuous then $M \otimes_A^\pi N = M \otimes_A^\beta N$.
\end{lemma}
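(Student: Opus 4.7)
The plan is to reduce the claim to the analogous statement for tensor products of locally convex spaces (ignoring the module structure). Both $M \otimes_A^\pi N$ and $M \otimes_A^\beta N$ are defined as quotients of $M \otimes_\pi N$ and $M \otimes_\beta N$ by the closure $J$ of the same purely algebraic subspace $J_0 \subseteq M \otimes N$. Hence if I can show that $M \otimes_\pi N$ and $M \otimes_\beta N$ carry identical locally convex topologies, then $J_0$ has the same closure in both, the two subspaces $J$ coincide, and the quotient topologies agree; this yields $M \otimes_A^\pi N = M \otimes_A^\beta N$.

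The coincidence $M \otimes_\pi N = M \otimes_\beta N$ I would prove by two opposite refinement relations between the topologies. One direction is free of the hypothesis: every continuous bilinear mapping is bounded, so every locally convex topology on $M \otimes N$ making the canonical bilinear map $\otimes$ continuous also makes it bounded, and is therefore coarser than the finest locally convex topology of the latter type, namely the topology of $M \otimes_\beta N$. In particular the topology of $M \otimes_\pi N$ is coarser than that of $M \otimes_\beta N$. For the opposite direction I would invoke the hypothesis applied to the canonical map $\otimes\colon M \times N \to M \otimes_\beta N$: by construction it is bounded, and the hypothesis then forces it to be continuous. Consequently the topology of $M \otimes_\beta N$ is itself among those locally convex topologies on $M \otimes N$ making $\otimes$ continuous; since $M \otimes_\pi N$ carries the finest such topology, its topology is finer, giving the matching inclusion and hence equality.

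The main obstacle is really bookkeeping rather than anything substantive: one must keep the two ``finest topology'' characterizations straight and observe that $J_0$, being generated by the algebraic relations $ma \otimes n - m \otimes an$, does not depend on any topology, so its closure automatically agrees in the two ambient topologies once these have been shown to coincide. The argument is essentially the module version of Proposition~5.8 of \cite{KM}.
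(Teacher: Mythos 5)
Your proposal is correct, and it organizes the argument differently from the paper. You reduce everything to the pre-quotient statement $M \otimes_\pi N = M \otimes_\beta N$ (which is exactly \cite[Proposition 5.8]{KM}, as you note) and then descend: both directions of that equality are sound, since a jointly continuous bilinear map is bounded, so any locally convex topology making $\otimes$ continuous is coarser than the $\beta$-topology, while the hypothesis applied to the bounded bilinear map $\otimes\colon M \times N \to M \otimes_\beta N$ upgrades it to continuous and gives the reverse refinement; and once the ambient topologies coincide, the purely algebraic subspace $J_0$ has the same closure $J$, so the quotient topologies agree. The paper never proves the space-level equality; it argues directly at the quotient level, playing the universal properties of Proposition \ref{tpiso} for the canonical maps $\otimes_A^\pi$ and $\otimes_A^\beta$ against each other: the identity $M \otimes_A^\beta N \to M \otimes_A^\pi N$ is continuous because $M \otimes_A^\beta N$ is bornological and $\id \circ \otimes_A^\beta = \otimes_A^\pi$ is continuous (hence bounded), and conversely the identity $M \otimes_A^\pi N \to M \otimes_A^\beta N$ is continuous because $\id \circ \otimes_A^\pi = \otimes_A^\beta$ is bounded and bilinear, hence continuous by hypothesis. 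Both proofs turn on the same single use of the assumption, namely upgrading the canonical bounded bilinear map into the bornological tensor product to a continuous one; your version buys a clean separation of the analytic content from the algebraic quotient bookkeeping (and avoids invoking the universal properties of the module tensor products), at the small extra cost of the closure argument for $J_0$, whereas the paper's version is shorter given that Proposition \ref{tpiso} and the bornologicity of $M \otimes_A^\beta N$ are already on record, and it sidesteps any discussion of whether $J_0$ has the same closure in the two topologies.
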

\begin{proof}
By construction, the topology of $M \otimes_A^\beta N$ is finer than the topology of $M \otimes_A^\pi N$: the identity $M \otimes_A^\beta N \to M \otimes_A^\pi N$ is continuous if and only if it is bounded (as $M \otimes_A^\beta N$ is bornological), which is the case if and only if $\id \circ \otimes_A^\beta = \otimes_A^\pi$ is bounded, but this mapping is even continuous.

\[
 \xymatrix{
M \times N \ar[d]_{\otimes_A^\beta} \ar[dr]^{\otimes_A^\pi} \\
M \otimes_A^\beta N \ar@<0.5ex>[r]^{\id} & M \otimes_A^\pi N \ar@<0.5ex>[l]^{\id}
}
\]

Conversely, the identity $M \otimes_A^\pi N \to M \otimes_A^\beta N$ is continuous if and only if $\id \circ \otimes_A^\pi = \otimes_A^\beta$ is continuous, which is the case by assumption because it is bounded and bilinear.
\end{proof}

By \cite[Proposition 5.8]{KM} the assumption of Lemma \ref{topgleich} is satisfied if $M$ and $N$ are metrizable, or if $M$ and $N$ are bornological and every separately continuous bilinear mapping on $E \times F$ is continuous.

\section{Topology on section spaces}\label{sec_sectop}

We will now define the standard topology on the space of sections of a finite dimensional vector bundle which turns it into a Fr\'echet space. In the following, the notions of a base of continuous seminorms and a family of seminorms defining the topology is as in \cite[Chapter 7]{Treves}.

For any open subset $\Omega$ of $\bR^n$ or of a manifold $M$ we call a sequence of sets $K_i \subseteq \Omega$ such that $\Omega = \bigcup_{i=1}^{\infty}K_i$ and each $K_i$ is compact and contained in the interior of $K_{i+1}$ a \emph{compact exhaustion} of $\Omega$.

Let $\Omega \subseteq \bR^n$ be open and $(\bE, \norm{\ })$ a Banach space. The space $C^\infty(\Omega, \bE)$ of all smooth functions from $\Omega$ into $\bE$ has the usual Fr\'echet structure (\cite[Chapter 40]{Treves}): defining seminorms $\fp_{K,k}$ (for $K \subseteq \Omega$ compact and $k \in \bN_0$) on $C^\infty(\Omega, \bE)$ by
\[ \fp_{K,k}(f) \coleq \max_{\abso{\alpha}\le k, x \in K}\norm{\pd^\alpha f(x)} \]
the topology of $C^\infty(\Omega, \bE)$ has as basis of continuous seminorms
the family $\{\, \fp_{K_n, k}\ |\ n \in \bN, k \in \bN_0\,\}$ where $(K_n)_n$ is any compact exhaustion of $\Omega$. This topology evidently does not depend on the choice of the compact exhaustion.

Now let $M$ be an $n$-dimensional manifold with atlas $\{(U_i, \varphi_i)\}_i$ and $\pi\colon E \to M$ a vector bundle whose typical fiber is an $m$-dimensional
Banach space $\bE$. Let $\{(V_j, \tau_j)\}_j$ be a trivializing covering of $E$. For any $i$ and $j$ a section $s \in \Gamma(E)$ has local representation
\[ s_{U_i, V_j} \coleq \pr_2 \circ \tau_j \circ s|_{U_i \cap V_j} \circ (\varphi_i|_{U_i \cap V_j})^{-1} \in C^\infty(\varphi_i(U_i \cap V_j), \bE) \]
where $\pr_2$ is the projection on the second component. This is illustrated by the following diagram.
\[
 \xymatrix{
\pi^{-1}(U_i \cap V_j) \ar[dr]^{\tau_j} & \\
U_i \cap V_j \ar[u]^{s_{U_i \cap V_j}} \ar[r]& U_i \cap V_j \times \bE \ar[d]^{\pr_2} \\
\varphi_i(U_i \cap V_j) \ar[u]^{\varphi_i^{-1}} \ar[r]^-{s_{U_i, V_j}}& \bE
}
\]

$\Gamma(E)$ then is endowed with the (locally convex) projective topology $\sT_E$ defined by the linear mappings
\[
\Gamma(E) \ni s \mapsto s_{U_i,V_j} \in C^\infty(\varphi_i(U_i \cap V_j), \bE)
\]
for all $i$ and $j$, which is complete by \cite[II 5.3]{Schaefer}. For a description by seminorms we 
set $\fp_{U_i, V_j, K, k}(s) \coleq \fp_{\varphi_i(K),k}(s_{U_i, V_j})$ for $s \in \Gamma(E)$. The topology $\sT_E$ has as basis of continuous seminorms the family $\mathfrak{P}_E$ given by all $\fp_{U_i, V_j, K_n, k}$ for $k \in \bN_0$, $(K_n)_n$ a compact exhaustion of $U_i \cap V_j$, and all $i,j,n,k$.
Because for each $s \in \Gamma(E) \setminus\{0\}$ there is some $\fp \in \mathfrak{P}_E$ such that $\fp(s)>0$, $\sT_E$ is Hausdorff.

\begin{proposition}\label{indepatlas}$\sT_E$ is independent of the atlas, the trivializing covering and the compact exhaustions. 
\end{proposition}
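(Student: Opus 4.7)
The plan is to prove all three independences by a single estimate showing that, given any two systems of data $(\{(U_i,\varphi_i)\}, \{(V_j,\tau_j)\}, \{(K_n)\})$ and $(\{(U'_{i'},\varphi'_{i'})\}, \{(V'_{j'},\tau'_{j'})\}, \{(K'_n)\})$ yielding topologies $\sT_E$ and $\sT'_E$, each generating seminorm of one is dominated by a finite maximum of generating seminorms of the other. By symmetry I would only do one direction. Equivalently, by the universal property of the projective topology, it suffices to show that each local-representation map $s \mapsto s_{U'_{i'}, V'_{j'}}$ into $C^\infty(\varphi'_{i'}(U'_{i'} \cap V'_{j'}), \bE)$ is continuous with respect to $\sT_E$.

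First I would dispose of the compact exhaustion: since every compact set in an open $\Omega \subseteq \bR^n$ is contained in some member of any chosen compact exhaustion, the family $\{\fp_{U_i,V_j,K,k} : K \subseteq U_i \cap V_j \text{ compact}, k \in \bN_0\}$ defines the same topology as $\mathfrak{P}_E$, so exhaustions drop out. For the remaining two independences, the key ingredient is the transformation rule on an overlap: writing $\psi = \varphi_i \circ (\varphi'_{i'})^{-1}$ for the chart transition and $g = g_{j j'} \circ \psi^{-1}$ for the (smooth, fiberwise linear) trivialization transition $\tau'_{j'} \circ \tau_j^{-1}$ read in the primed chart, one has
\[
s_{U'_{i'},V'_{j'}}(x) \;=\; g(x)\cdot \bigl(s_{U_i,V_j}\circ \psi\bigr)(x)
\]
on $\varphi'_{i'}(U_i \cap V_j \cap U'_{i'} \cap V'_{j'})$. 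Applying Leibniz and chain rule, derivatives up to order $k$ of $s_{U'_{i'},V'_{j'}}$ are bounded by a polynomial in derivatives up to order $k$ of $g$ and $\psi$ times derivatives up to order $k$ of $s_{U_i,V_j} \circ \psi$.

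Now given a generating seminorm $\fp_{U'_{i'},V'_{j'},K',k}$ with $K' \subseteq U'_{i'} \cap V'_{j'}$ compact and $k \in \bN_0$, I would cover $K'$ by finitely many relatively compact open sets $W_r$ with $\overline{W_r} \subseteq U_{i_r} \cap V_{j_r} \cap U'_{i'} \cap V'_{j'}$ (possible since the $U_i \cap V_j$ form an open cover of $M$), and set $L_r \coleq K' \cap \overline{W_r}$, which is compact and contained in $U_{i_r} \cap V_{j_r} \cap U'_{i'} \cap V'_{j'}$, with $K' = \bigcup_r L_r$. On each $\varphi'_{i'}(L_r)$ the transition data $g$ and $\psi$ (and all their derivatives up to order $k$) are bounded, giving a constant $C_{r,k}$ with
\[
\max_{|\alpha|\le k,\ x\in\varphi'_{i'}(L_r)}\norm{\pd^\alpha s_{U'_{i'},V'_{j'}}(x)} \;\le\; C_{r,k}\cdot \fp_{U_{i_r},V_{j_r},L_r,k}(s).
\]
Taking the maximum over the finitely many $r$ yields $\fp_{U'_{i'},V'_{j'},K',k}(s) \le (\max_r C_{r,k})\cdot \max_r \fp_{U_{i_r},V_{j_r},L_r,k}(s)$, which by the first paragraph is a continuous seminorm for $\sT_E$. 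Hence $s \mapsto s_{U'_{i'},V'_{j'}}$ is continuous into $C^\infty(\varphi'_{i'}(U'_{i'} \cap V'_{j'}),\bE)$ (continuity of a map into a Fréchet space is tested on the defining seminorms), and by the universal property $\sT'_E \subseteq \sT_E$.

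The main obstacle is not conceptual but notational: carrying out the Leibniz/chain rule estimate cleanly for a bundle with nontrivial transition cocycle, since one must track both the base transition $\psi$ and the fiber transition $g$ simultaneously. Everything else — the reduction to local representations, the extraction of a finite subcover of $K'$, and the passage from an arbitrary compact to one in the fixed exhaustion — is routine once the transformation rule is written down.
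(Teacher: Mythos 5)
Your proposal is correct and follows essentially the same route as the paper's proof: reduce to dominating each generating seminorm of one system by a continuous seminorm of the other, split the compact set into finitely many compact pieces each contained in a single overlap $U_i \cap V_j$, apply the chart/trivialization transition formula together with the Leibniz and chain rules (bounding the transition data on compacts), and conclude by symmetry. The only typo-level slip is that the fiber transition read in the primed chart should be $g = g_{jj'} \circ (\varphi'_{i'})^{-1}$ rather than $g_{jj'} \circ \psi^{-1}$, which does not affect the estimate.
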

\begin{proof}
 Let $M$ have atlases $\{(U_i, \varphi_i)\}_i$ and $\{(\tilde U_k, \tilde \varphi_k)\}_k$ and let $E$ have trivializing coverings $\{(V_j, \tau_j)\}_j$ and $\{(\tilde V_l, \tilde \tau_l)\}_l$. This gives rise to topologies $\sT_E$ and $\tilde \sT_E$ on $\Gamma(E)$. For continuity of the identity mapping $[\Gamma(E), \sT_E] \to [\Gamma(E), \tilde \sT_E]$ it suffices to show that for all $k,l$, every compact exhaustion $(\tilde K_m)_m$ of $\tilde U_k \cap \tilde V_l$, and all $m$, $p$ there 
is a continuous seminorm $\fp$ of $(\Gamma(E), \sT_E)$ such that
\begin{equation}\label{eins}
\fp_{\tilde U_k,\tilde V_l, \tilde K_m, p}(s) \le \fp(s).
\end{equation}

First, we show that we can assume that $\tilde K_m$ is contained in some $U_i \cap V_j$. As the open sets $U_i \cap V_j$ form an open cover of $M$ we can write $\tilde K_m$ as the disjoint union of finitely many $\tilde K_m^{a,b} \csub U_{i(a)} \cap V_{j(b)} \cap \tilde U_k \cap \tilde V_l$. Assuming that \eqref{eins} holds in this case there are continuous seminorms $\fp_{a,b}$ of $\sT_E$ such that
\[ \fp_{\tilde U_k,\tilde V_l, \tilde K_m^{a,b}, p}(s) \le \fp_{a,b}(s) \]
for all $a,b$. We take the maximum over all $a,b$ on both sides and obtain $\fp_{\tilde U_k, \tilde V_l, \tilde K_m, p}$ on the left side and a continuous seminorm $\fp$ on the right side. Thus we may assume that $K \coleq \tilde K_m \csub U_i \cap V_j \cap \tilde U_k \cap \tilde V_l$ for some $i,j,k,l$.
The left side of \eqref{eins} is then given by
\[ \max_{\substack{\abso{\alpha}\le p\\x \in \tilde \varphi_k(K)}}\norm{\pd^\alpha s_{\tilde U_k,\tilde V_l}(x)}. \]
For $x \in \tilde \varphi_k(K)$ we then write
\begin{align*}
s_{\tilde U_k, \tilde V_l}(x) &= \pr_2 \circ \tilde\tau_l \circ \tau_j^{-1}(\tilde \varphi_k^{-1}(x), s_{U_i, V_j} \circ \varphi_i \circ \tilde \varphi_k^{-1}(x)) \\
&= (\tilde \tau_l \circ \tau_j^{-1})_{\tilde\varphi_k^{-1}(x)}(s_{U_i, V_j} \circ \varphi_i \circ \tilde \varphi_k^{-1}(x))
\end{align*}
where the transition mapping $x \mapsto (\tilde \tau_l \circ \tau_j^{-1})_x$ is a smooth function from $V_j \cap \tilde V_l$ to $\Linc(\bE, \bE)$.
By the product rule we obtain for $\pd^\alpha s_{\tilde U_k, \tilde V_l}(x)$ terms of the form
\[ \pd^\beta [x \mapsto (\tilde \tau_l \circ \tau_j^{-1})_{\tilde \varphi_k^{-1}(x)}] \cdot \pd^\gamma [ x \mapsto  s_{U_i, V_j} (\varphi_i \circ \tilde \varphi_k^{-1}(x))] \]
for some multi-indices $\beta,\gamma$. Taking the maximum over $x \in \tilde \varphi_k(K)$, the first factor gives a constant and the second factor gives a sum of terms of the form
\[ \max_{x \in \varphi_i(K)}\norm{\pd^{\gamma'} s_{U_i, V_j}(x)} \le \fp_{U_i, V_j, K, \abso{\gamma'}}(s)\]
for some $\gamma'$. Altogether, these terms give a continuous seminorm of $\sT_E$, whence the identity mapping from $[\Gamma(E), \tilde\sT_E] \to [\Gamma(E), \sT_E]$ is continuous. By symmetry we have a homeomorphism.
\end{proof}

Because the trivializing covering of $E$ and the atlas of $M$ can be assumed to be countable (\cite[1.4.8]{brickell}) $\sT_E$ is determined by a countable family of seminorms. Therefore, $[\Gamma(E), \sT_E]$ as well as its closed subspace $\Gamma_{c,L}(E)$ for a compact set $L \csub M$ with the subspace topology are Fr\'echet spaces.

In order to turn $\Gamma_c(E)$ into a complete topological space we endow it with the strict inductive limit topology of a suitable sequence of Fr\'echet subspaces, which by \cite[II 6.6]{Schaefer} then is complete. As $M$ is $\sigma$-compact we obtain an (LF)-space $\Gamma_c(E) = \varinjlim \Gamma_{c,L}(E)$ where $L$ ranges through a compact exhaustion of $M$.

For the particular case of $\CinfM$ we abbreviate
$\fp_{U_i,K,k} \coleq \fp_{U_i, U_i, K, k}$.
Then we obtain a basis of continuous seminorms
\[ \mathfrak{P}_M \coleq \{\, \fp_{U_i,K_n^i,k}\ |\ k \in \bN_0, n \in \bN, i\,\} \]
where $\{K^i_n\}_n$ is a compact exhaustion of $\varphi_i(U_i)$.

We now state simple lemmata (proof omitted) about continuity of bilinear mappings as determined by seminorms.

\begin{lemma}\label{bilinc}Let $E$, $F$ and $G$ be topological vector spaces with locally convex topology. A bilinear mapping $f\colon E \times F \to G$ is continuous if and only if for each continuous seminorm $r$ on $G$ there are continuous seminorms $p$ on $E$ and $q$ on $F$ such that for all $x \in E$ and $y \in F$ we have $r(f(x,y)) \le p(x) q(x)$.
%
%

If $\cP_E$ resp.~$\cP_F$ are bases of continuous seminorms on $E$ resp.~$F$ and $\cS_G$ a family of seminorms defining the topology of $G$ then a bilinear mapping $f\colon E \times F \to G$ is continuous if and only if for each $r \in \cS_G$ there are seminorms $p \in \cP_E$ and $q \in \cP_F$ and a constant $C>0$ such that $r(f(x,y)) \le C p(x) q(x)$ for all $x \in E$, $y \in F$.
\end{lemma}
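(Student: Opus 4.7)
The plan is to reduce the continuity question to continuity at $(0,0)$, which for a bilinear mapping is equivalent to joint continuity everywhere, and then to apply the standard scaling argument for bilinear maps. For the ``only if'' direction of the first assertion, given a continuous seminorm $r$ on $G$ the set $\{z\in G : r(z) < 1\}$ is a $0$-neighborhood in $G$. Its preimage under $f$ contains some product $U \times V$ of $0$-neighborhoods in $E$ and $F$, and one can choose continuous seminorms $p$ on $E$ and $q$ on $F$ such that $\{p<1\}\subseteq U$ and $\{q<1\}\subseteq V$. For arbitrary $x,y$ with $p(x),q(y) > 0$, the bilinearity yields $r(f(x,y)) = \lambda\mu\, r(f(x/\lambda, y/\mu)) < \lambda\mu$ whenever $\lambda > p(x)$ and $\mu > q(y)$; letting $\lambda \searrow p(x)$ and $\mu \searrow q(y)$ gives $r(f(x,y)) \le p(x)q(y)$. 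The degenerate cases $p(x)=0$ or $q(y)=0$ follow by letting the corresponding scalar tend to $0$ while keeping the other fixed, forcing $r(f(x,y)) = 0$.

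For the ``if'' direction, the inequality $r(f(x,y)) \le p(x)q(y)$ directly shows that the $0$-neighborhood $\{p<\sqrt\varepsilon\} \times \{q<\sqrt\varepsilon\}$ in $E\times F$ is mapped into $\{r<\varepsilon\}$, so $f$ is continuous at $(0,0)$ and hence everywhere by bilinearity.

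The second assertion follows from the first by unwinding the definitions of ``base of continuous seminorms'' and ``family of seminorms defining the topology.'' Given $r \in \cS_G$, the first part provides continuous seminorms $p'$ on $E$ and $q'$ on $F$ with $r(f(x,y)) \le p'(x) q'(y)$; since $\cP_E$ and $\cP_F$ are bases of continuous seminorms, there exist $p \in \cP_E$, $q \in \cP_F$ and constants $C_1, C_2 > 0$ with $p' \le C_1 p$ and $q' \le C_2 q$, yielding the required inequality with $C \coleq C_1 C_2$. Conversely, if the inequality holds for every $r \in \cS_G$, then every continuous seminorm on $G$ is dominated by a constant multiple of a finite maximum of elements of $\cS_G$; combining such a maximum with the corresponding finite maxima of seminorms from $\cP_E$ and $\cP_F$ (which are again continuous seminorms) and applying the first assertion in the converse direction gives continuity of $f$.

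I do not expect any real obstacle here: the scaling argument is classical and the rest is bookkeeping. The only point requiring a little care is the distinction between a base of continuous seminorms (every continuous seminorm is dominated by a scalar multiple of a base element) and a family defining the topology (generating the topology as a projective one), which is why constants appear only in the second assertion.
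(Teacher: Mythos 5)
Your proof is correct: the Minkowski-functional scaling argument (including the careful treatment of the degenerate cases $p(x)=0$ or $q(y)=0$), the standard fact that a bilinear map continuous at $(0,0)$ is jointly continuous, and the seminorm bookkeeping for the second assertion are all sound, and you rightly read the paper's $q(x)$ as a typo for $q(y)$. The paper states this lemma with the proof explicitly omitted (``simple lemmata (proof omitted)''), so there is nothing to compare against; your argument is precisely the classical one the author takes for granted.
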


\begin{lemma}
\begin{enumerate}
\item[(i)]$\CinfM$ is a locally convex unital commutative associative algebra.
\item[(ii)]For any vector bundle $E$ the space of sections $\Gamma(E)$ is a Hausdorff locally convex module over $\CinfM$.
\end{enumerate}
\end{lemma}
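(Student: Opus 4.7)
The plan is to reduce both statements to the criterion of Lemma \ref{bilinc}, since the algebraic structure and the Hausdorff / locally convex nature of $\sT_E$ have already been dealt with in Section \ref{sec_sectop}. Pointwise operations on smooth functions visibly make $\CinfM$ into a commutative associative unital $\bK$-algebra, and the pointwise module action $(f, s) \mapsto fs$ turns $\Gamma(E)$ into a $\CinfM$-module. Hausdorffness of $\sT_E$ was observed just after the definition of $\mathfrak{P}_E$. So the only real content is that multiplication and module multiplication are (jointly) continuous.

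For (i), I would verify the seminorm criterion from Lemma \ref{bilinc} using the basis $\mathfrak{P}_M = \{\fp_{U_i, K_n^i, k}\}$. Given $f, g \in \CinfM$, the Leibniz rule applied to $\pd^\alpha((f \circ \varphi_i^{-1})(g \circ \varphi_i^{-1}))$ on a compact set $K \csub \varphi_i(U_i)$ yields a bound of the form
\[
\fp_{U_i, K, k}(fg) \le C_k\, \fp_{U_i, K, k}(f)\, \fp_{U_i, K, k}(g)
\]
for some combinatorial constant $C_k$. By Lemma \ref{bilinc} this gives continuity of the multiplication map, so $\CinfM$ is a locally convex algebra.

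For (ii), by Proposition \ref{indepatlas} I may replace the atlas of $M$ and the trivializing cover of $E$ by a common refinement $\{(U_i, \varphi_i, \tau_i)\}_i$ in which each chart $U_i$ is also a trivializing open set. In that situation the local representation of $fs$ on $\varphi_i(U_i)$ is the product of the scalar function $f \circ \varphi_i^{-1}$ with the $\bE$-valued function $s_{U_i, U_i}$, and again the Leibniz rule (applied componentwise) gives, for $K \csub \varphi_i(U_i)$,
\[
\fp_{U_i, U_i, K, k}(fs) \le C_k\, \fp_{U_i, K, k}(f)\, \fp_{U_i, U_i, K, k}(s).
\]
Another application of Lemma \ref{bilinc} then yields continuity of the module multiplication $\CinfM \times \Gamma(E) \to \Gamma(E)$, so $\Gamma(E)$ is a locally convex $\CinfM$-module.

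The one point requiring care, rather than a genuine obstacle, is bookkeeping: the natural seminorms on $\CinfM$ and on $\Gamma(E)$ are indexed by charts respectively by chart-trivialization pairs, so one has to invoke Proposition \ref{indepatlas} to align them on a common refinement before the Leibniz estimate directly produces the product bound required by Lemma \ref{bilinc}. After that alignment the rest is an elementary multivariable calculus computation.
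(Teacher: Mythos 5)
Your proposal is correct and follows essentially the same route as the paper: both pass to a common atlas/trivializing covering via Proposition \ref{indepatlas}, apply the Leibniz rule to obtain the product estimates $\fp_{U_i,K,k}(fg) \le C\,\fp_{U_i,K,k}(f)\,\fp_{U_i,K,k}(g)$ and $\fp_{U_i,U_i,K,k}(fs) \le C\,\fp_{U_i,K,k}(f)\,\fp_{U_i,U_i,K,k}(s)$, and conclude joint continuity via the seminorm criterion of Lemma \ref{bilinc}. Your version is if anything slightly more explicit than the paper's, which leaves the appeal to Lemma \ref{bilinc} implicit.
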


\begin{proof}
We will verify continuity of the respective multiplication mappings, the rest being immediately clear from the definitions. Let $\{(U_i, \varphi_i)\}_i$ be an atlas of $M$ and $\{(U_i, \tau_i)\}_i$ a trivializing covering of $\Gamma(E)$ -- by Proposition \ref{indepatlas} we can always intersect the domains of the atlas and the trivializing covering in order to have them in this form. By the product rule for differentiation we obtain
\begin{align*}
\fp_{i,K,k}(fg) &\le C \fp_{i,K,k}(f) \cdot \fp_{i, K, k}(g)\textrm{ and}\\
\fp_{i,K,k}(fs) &\le C \fp_{i,K,k}(f) \cdot \fp_{U_i, U_i, K, k}(s)
\end{align*}
for all $K \csub U_i$, $k \in \bN_0$, $f,g \in \CinfM$, $s \in \Gamma(E)$, and some constant $C>0$.
\end{proof}

\begin{lemma}\label{dualcont}Given a trivial vector bundle $E$ and a basis $\{ b_1,\dotsc, b_n\}$ of $\Gamma(E)$ the corresponding dual basis $\{ b_1^*,\dotsc,b_n^* \}$ consists of elements of $\Linc_\CinfM(\Gamma(E), \CinfM)$.
\end{lemma}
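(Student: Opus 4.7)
The plan is to verify the two required properties of each $b_i^*$ separately: $\CinfM$-linearity, which is essentially built into the dual-basis definition, and continuity, which I will prove by realizing $b_i^*$ as a coordinate projection for a suitable trivialization of $E$. The $\CinfM$-linearity is immediate: any $s \in \Gamma(E)$ has a unique expansion $s = \sum_{j=1}^n f_j b_j$ with $f_j \in \CinfM$, and $(fs) = \sum_j (f f_j) b_j$ for $f \in \CinfM$, so $b_i^*(fs) = f f_i = f \cdot b_i^*(s)$.

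For continuity, I would first note that since $E$ is trivial and $\Gamma(E)$ is a free $\CinfM$-module of rank equal to the fiber dimension, the given basis $\{b_1,\dotsc,b_n\}$ has exactly $n = \mathrm{rank}(E)$ elements, and the pointwise values $b_1(x),\dotsc,b_n(x)$ form a basis of the fiber $E_x$ for every $x \in M$. Indeed, given $v \in E_x$, a bump-function argument extends $v$ to a global section $s$ with $s(x)=v$; writing $s = \sum_j f_j b_j$ yields $v = \sum_j f_j(x) b_j(x)$, so the $b_j(x)$ span $E_x$, and there are exactly $n = \dim E_x$ of them. Consequently the bundle map
\[
\tau\colon E \to M \times \bK^n,\qquad \sum_j c_j b_j(x) \mapsto (x, (c_1,\dotsc,c_n)),
\]
is a smooth vector bundle isomorphism (its smoothness on any pre-existing trivialization $\sigma$ reduces to smoothness of $x \mapsto B(x)^{-1}$, where $B(x)$ is the invertible matrix of the $\sigma$-components of the $b_j(x)$). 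In particular $\tau$ provides a global trivialization of $E$, and by Proposition \ref{indepatlas} we may compute $\sT_E$ using this single trivializing patch $V = M$ together with any atlas $\{(U_k,\varphi_k)\}_k$ of $M$.

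Under $\tau$, the section $s = \sum_j f_j b_j$ has trivialization $x \mapsto (f_1(x),\dotsc,f_n(x))$, so the local representation on a chart $(U_k,\varphi_k)$ is simply the tuple $(f_1\circ\varphi_k^{-1},\dotsc,f_n\circ\varphi_k^{-1})$. Choosing the max-norm on $\bK^n$ we obtain, for every compact $K \subseteq U_k$ and every $l \in \bN_0$,
\[
\fp_{U_k,K,l}(b_i^*(s)) \;=\; \fp_{U_k,K,l}(f_i) \;\le\; \fp_{U_k,M,K,l}(s).
\]
As $(U_k,K,l)$ varies, the left-hand seminorms form a defining family on $\CinfM$ while the right-hand seminorms form a base of continuous seminorms on $\Gamma(E)$; hence $b_i^*\colon \Gamma(E) \to \CinfM$ is continuous. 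The only non-bookkeeping step is the pointwise-basis claim above; once it is granted, the seminorm estimate is essentially tautological.
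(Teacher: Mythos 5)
Your proof is correct, but it takes a genuinely different route from the paper's. The paper argues by change of basis: it first handles the special frame $\alpha_i(x) = \tau^{-1}(x,e_i)$ attached to a fixed trivialization $\tau$, for which continuity of the dual functionals is immediate since $\alpha_i^*(s) = \pr_i \circ \pr_2 \circ \tau \circ s$ is post-composition with fixed maps; it then writes $b_i^* = \sum_j a_i^j \alpha_j^*$ with $a_i^j \in \CinfM$ (using that $\{\alpha_j^*\}$ is a basis of the dual module) and concludes from the continuity of multiplication $\CinfM \times \CinfM \to \CinfM$ established just before the lemma. You instead prove that an arbitrary module basis is a pointwise frame --- $b_1(x),\dotsc,b_n(x)$ is a basis of each fiber $E_x$ --- via bump-function extension plus the rank count, build the trivialization $\tau$ adapted to $\{b_j\}$, and invoke Proposition \ref{indepatlas} so that each $b_i^*$ becomes a literal coordinate projection satisfying $\fp_{U_k,K,l}(b_i^*(s)) \le \fp_{U_k,M,K,l}(s)$. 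All the steps you flag are sound: the spanning argument works, the count $n = \mathrm{rank}(E)$ is legitimate because $\CinfM$ is a commutative unital ring and hence free modules have well-defined rank, and smoothness of $x \mapsto B(x)^{-1}$ follows from Cramer's rule. What the paper's route buys is brevity, reusing already-proven continuity of module multiplication and avoiding any pointwise geometry; what your route buys is a self-contained quantitative estimate (with constant $1$ in adapted seminorms) and, as a byproduct, the useful fact that every $\CinfM$-module basis of $\Gamma(E)$ for trivial $E$ is a global smooth frame --- which is precisely the invertibility over $\CinfM$ of the paper's change-of-basis matrix $(a_i^j)$, in disguise.
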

\begin{proof}Let $\tau\colon E \to M \times \bR^n$ be trivializing. For the basis $\alpha_i(x) \coleq \tau^{-1}(x, e_i)$ where $\{e_1,\dotsc,e_n\}$ is the canonical basis of $\bR^n$ the result is clear, as the dual basis is then given by $\alpha_i^*(s)(x) = \pr_i \circ \pr_2 \circ \tau \circ s$. For an arbitrary basis $\{b_1,\dotsc,b_n\}$ we know that $b_i^* = \sum a_i^j \alpha_j^*$ for some $a_i^j \in \CinfM$. Because for $f \in \CinfM$ the mapping $s \mapsto (f \alpha_j^*)(s) = f \cdot \alpha_j(s)$ is the composition of $\alpha_j$ and multiplication with $f$, both continuous, $b_i^*$ is the sum of continuous mappings.
\end{proof}

We recall the following basic facts about products and direct sums of topological vector spaces. Let $(M_i)_i$ be a family of topological vector spaces. The product $\prod_i M_i$ carries the projective topology defined by the canonical projections $\pi_i$ and the external direct sum $\bigoplus_i M_i$ the inductive linear topology with respect to the canonical injections, which makes them topological vector spaces. If all $M_i$ are locally convex $A$-modules $\prod_i M_i$ is a locally convex $A$-module: denoting the multiplication mappings by $m\colon A \times \prod_i M_i \to \prod_i M_i$ resp.\ $m_i\colon A \times M_i \to M_i$, $m$ is continuous because $\pi_i \circ m = m_i \circ (\id \times \pi_i)$ is continuous for each $i$. For finitely many factors (which is all we will need) $\bigoplus_i M_i = \prod_i M_i$ topologically.

We will now establish preliminaries required for a topological version of the isomorphism $\Gamma(E \otimes F) \cong \Gamma(E) \otimes_{\CinfM} \Gamma(F)$; we need explicit expressions as well as continuity of some canonical isomorphisms. 

\begin{proposition}\label{secdsum}Given vector bundles $E_1, \dotsc, E_n$ the canonical isomorphism of $\CinfM$-modules
\[ \Gamma(\bigoplus_{\mathclap{i=1\dotsc n}} E_i) \cong \bigoplus_{\mathclap{i=1\dotsc n}}\Gamma(E_i) \]
is a homeomorphism.
\end{proposition}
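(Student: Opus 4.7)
The plan is to use Proposition \ref{indepatlas} to work with trivializations compatible with the direct sum decomposition, and to verify that the defining seminorms on both sides of the canonical algebraic isomorphism agree up to equivalence.

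First, I would fix an atlas $\{(U_i, \varphi_i)\}_i$ of $M$ and, by refining and taking intersections, a single open covering $\{V_j\}_j$ together with trivializations $\tau_j^{(r)}\colon E_r|_{V_j} \to V_j \times \bE_r$ for each $r = 1,\dotsc,n$, where $\bE_r$ denotes the typical fiber of $E_r$. The direct sum bundle $\bigoplus_r E_r$ is then trivialized over $V_j$ by the fibrewise product of the $\tau_j^{(r)}$, with values in $V_j \times \prod_r \bE_r$. On the finite product $\prod_r \bE_r$ I would fix the maximum norm, which is equivalent to every other Banach norm on a finite product; this ensures that partial derivatives and sup-norms decompose coordinatewise.

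With this setup, the local representation of a section $s$ of $\bigoplus_r E_r$ satisfies
\[ s_{U_i, V_j} = (s^{(1)}_{U_i, V_j}, \dotsc, s^{(n)}_{U_i, V_j}), \]
where $s^{(r)} \coleq \pr_r \circ s \in \Gamma(E_r)$ is the $r$-th component under the canonical algebraic isomorphism $\Phi\colon \Gamma(\bigoplus_r E_r) \to \bigoplus_r \Gamma(E_r)$. Consequently, for every compact $K \subseteq U_i \cap V_j$ and every $k \in \bN_0$,
\[ \fp_{U_i, V_j, K, k}(s) = \max_{1 \le r \le n} \fp_{U_i, V_j, K, k}(s^{(r)}). \]

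The right-hand side is precisely what generates the projective (and, since $n$ is finite, equivalently the inductive) topology on $\prod_r \Gamma(E_r) = \bigoplus_r \Gamma(E_r)$ defined by the coordinate mappings. Hence $\Phi$ is a homeomorphism, and by Proposition \ref{indepatlas} this conclusion is independent of the choices of atlas, trivializing covering, and compact exhaustions. The only point requiring care is the compatibility of the trivializations with the direct sum decomposition, which is handled by passing to a common refinement; no substantive obstacle is expected.
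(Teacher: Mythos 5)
Your proposal is correct and follows essentially the same route as the paper: there, too, one trivializes $\bigoplus_i E_i$ fibrewise from trivializations of the summands over common domains, observes that local representations decompose componentwise, and reads off continuity of the canonical maps in both directions from the section seminorms. The only differences are of emphasis: you make the seminorm identity $\fp_{U_i,V_j,K,k}(s)=\max_r \fp_{U_i,V_j,K,k}(s^{(r)})$ explicit via the maximum norm on the fiber (harmless, since the fibers are finite dimensional), whereas the paper states that continuity of its explicit injections $\tilde\iota_j$ and projections $\tilde\pi_j$ is ``easily seen from the respective seminorms,'' and the paper additionally verifies the underlying algebraic isomorphism (smoothness of the component sections and the direct-product property of $\Gamma(\bigoplus_j E_j)$), which your seminorm computation in fact also yields implicitly.
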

\begin{proof}
For each $x \in M$ let $\iota_j\colon E_{jx} \to \bigoplus_{i=1\dotsc n} E_{ix}$ denote the canonical injection of the fiber $E_{jx}$ into the direct sum and $\pi_j\colon \bigoplus_{i=1\dotsc n}E_{ix} \to E_{jx}$ the canonical projection. Define injections and projections, respectively, by
\begin{gather*}
\tilde \iota_j\colon \Gamma(E_j) \to \Gamma(\bigoplus_{\mathclap{i=1\dotsc n}} E_i),\quad (\tilde \iota_j s_j)(x) \coleq \iota_j(s_j(x))\quad\text{for }s_j \in \Gamma(E_j),\\
\tilde \pi_j\colon \Gamma(\bigoplus_{\mathclap{i=1\dotsc n}}E_i) \to \Gamma(E_j),\quad (\tilde \pi_j s)(x) \coleq  \pi_j(s(x))\quad\text{for }s \in \Gamma(\bigoplus_{\mathclap{i=1\dotsc n}}E_i).
\end{gather*}

We have to verify that the images of $\tilde \iota_j$ and $\tilde \pi_j$ are indeed smooth sections. Let $\{U_l, \varphi_l\}_l$ be an atlas of $M$ and $\{(V^j_{k_j}, \tau^j_{k_j})\}_{k_j}$ trivializing coverings of $E_j$, then $\bigoplus_{i=1\dotsc n}E_i$ has trivializing covering
\[ \{ ( \bigcap_{\mathclap{j=1\dotsc n}}V^j_{k_j}, \sigma_{k_1,\dotsc,k_n})\}_{k_1,\dotsc,k_n} \]
where $(\sigma_{k_1,\dotsc,k_n})_x(t) \coleq (x, (\pr_2 \tau^1_{k_1}\pi_1 t,\dotsc, \pr_2 \tau^n_{k_n}\pi_n t))$ for $t \in \bigoplus_{j=1\dotsc n} E_{jx}$ and $x \in \bigcap_{j=1\dotsc n}V^j_{k_j}$. First, let $s_j \in \Gamma(E_j)$; then on each chart domain $U_l \cap V^1_{k_1} \cap \dotsc \cap V^n_{k_n}$, $\pr_2 \circ \sigma_{k_1,\dotsc,k_n} \circ \tilde\iota_j(s_j) \circ \varphi_l^{-1}$ is smooth because its only nonzero component is $\pr_2 \circ \tau_{k_j}^j \circ s_j \circ \varphi_l^{-1}$ which is smooth by assumption. Conversely, let $s \in \Gamma(\bigoplus_{i=1 \dotsc n} E_i)$. Then on each chart domain as above $\pr_2 \circ \tau^j_{k_j} \circ \tilde \pi_j(s) \circ \varphi_l^{-1} = \pr_2 \circ \tau^j_{k_j} \circ \pi_j  \circ s \circ \varphi_l^{-1} = \pr_j \circ \pr_2 \circ \sigma_{k_1,\dotsc,k_n} \circ s \circ \varphi_l^{-1}$ is smooth. Finally, $\tilde \pi_k \circ \tilde \iota_j = \id$ for $k=j$ and $0$ otherwise; as $\sum_j \tilde \iota_j \circ \tilde \pi_j(s) = s$, $\Gamma(\bigoplus_{j=1\dotsc n} E_j)$ is a direct product for the family of $\CinfM$-modules $(\Gamma(E_j))_j$ (\cite[Theorem 6.7]{blyth}) and algebraically isomorphic to $\bigoplus_{j=1\dotsc n}\Gamma(E_j)$. The isomorphism $\psi\colon \Gamma(\bigoplus_{j=1\dotsc n}E_j) \to \bigoplus_{j=1\dotsc n}\Gamma(E_j)$ is given by
\begin{align*}
\psi(s) &= (\tilde \pi_1(s),\dotsc,\tilde \pi_n(s)) \textrm{ and}\\
\psi^{-1}(s_1,\dotsc,s_n) &= \tilde\iota_1(s_1) + \dotsc + \tilde \iota_n(s_n).
\end{align*}
Continuity of $\tilde \pi_j$ and $\tilde \iota_j$ is easily seen from the respective seminorms, which implies continuity of $\psi$ and $\psi^{-1}$.
\end{proof}

\begin{lemma}\label{vbdsum} For vector bundles $E_1,\dots,E_n$ and $F_1,\dotsc,F_m$ over $M$ we have a canonical vector bundle isomorphism
\[ (\bigoplus_{\mathclap{i=1\dotsc n}}E_i) \otimes (\bigoplus_{\mathclap{j=1\dotsc m}} F_j) \cong \bigoplus_{\mathclap{\substack{i=1 \dotsc n\\j=1 \dotsc m}}} (E_i \otimes F_j) \]
\end{lemma}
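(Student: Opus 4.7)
My plan is to construct the isomorphism fiberwise from the well known distributivity of the algebraic tensor product over finite direct sums for vector spaces, and then to verify smoothness via local trivializations. Concretely, for each $x\in M$ there is a canonical $\bK$-linear isomorphism
\[
 \Phi_x\colon \Bigl(\bigoplus_{i=1\dotsc n} E_{i,x}\Bigr) \otimes \Bigl(\bigoplus_{j=1\dotsc m} F_{j,x}\Bigr) \;\longrightarrow\; \bigoplus_{\substack{i=1\dotsc n\\j=1\dotsc m}} \bigl(E_{i,x} \otimes F_{j,x}\bigr)
\]
uniquely determined on elementary tensors by $\Phi_x\bigl((e_i)_i \otimes (f_j)_j\bigr) = (e_i \otimes f_j)_{i,j}$; its inverse is induced by the canonical injections. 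I would then define $\Phi$ to be the fiberwise assembly of the $\Phi_x$'s and check that it is a vector bundle isomorphism covering $\id_M$.

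Smoothness of $\Phi$ and $\Phi^{-1}$ is a local issue, so I would pick any open $U\subseteq M$ over which all $E_i$ and $F_j$ are simultaneously trivial (such $U$ cover $M$ since finite intersections of trivializing neighborhoods suffice). Writing the trivializations as $\tau_i\colon E_i|_U \cong U\times \bE_i$ and $\sigma_j\colon F_j|_U \cong U\times \bF_j$, the induced trivializations of $\bigoplus_i E_i$ and $\bigoplus_j F_j$ over $U$ have typical fibers $\bigoplus_i \bE_i$ and $\bigoplus_j \bF_j$, so that the induced trivialization of the left-hand bundle has typical fiber $\bigl(\bigoplus_i \bE_i\bigr) \otimes \bigl(\bigoplus_j \bF_j\bigr)$. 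The induced trivialization of the right-hand bundle has typical fiber $\bigoplus_{i,j} (\bE_i \otimes \bF_j)$. In these trivializations $\Phi$ becomes $\id_U$ times the purely algebraic distributivity isomorphism of finite-dimensional vector spaces, which is a (constant, hence smooth) linear isomorphism; the same is true of $\Phi^{-1}$. This shows that $\Phi$ is a vector bundle isomorphism.

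I expect no real obstacle here: the statement is essentially the fiberwise distributivity law, and the only thing to verify beyond linear algebra is that the construction is compatible with simultaneous local trivializations, which it manifestly is because tensor product and direct sum of vector bundles are by definition fiberwise constructions with the standard transition functions. The canonicity (naturality in the $E_i$ and $F_j$) also follows from the fact that $\Phi_x$ is the unique map intertwining the canonical inclusions and tensoring maps on each fiber.
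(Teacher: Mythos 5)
Your proposal is correct and takes essentially the same route as the paper: the paper's proof consists precisely of the fiberwise distributivity map $(v_1,\dotsc,v_n)\otimes(w_1,\dotsc,w_m)\mapsto(v_i\otimes w_j)_{i,j}$ with inverse induced by the canonical injections $\iota_i$, $\iota_j$, declared ``evidently'' a vector bundle isomorphism over the identity. Your additional check of smoothness in simultaneous local trivializations merely spells out the detail the paper leaves implicit.
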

\begin{proof}
Evidently the fiberwise defined map
\[ (v_1,\dotsc, v_n) \otimes (w_1,\dotsc,w_m) \mapsto (v_1 \otimes w_1, \dotsc, v_n \otimes w_m ) \]
(where $v_i \in E_{ix}$ and $w_j \in F_{jx}$ for all $i,j$ and fixed $x$) is a vector bundle isomorphism over the identity. Its inverse is induced by the mappings
\[ e_i \otimes f_j \mapsto \iota_i e_i \otimes \iota_j f_j\qquad (e_i \in E_{ix}, f_j \in F_{jx}) \]
for all $i,j$, where $\iota_i$ and $\iota_j$ are the canonical injections $E_{ix} \to \bigoplus_i E_{ix}$ and $F_{jx} \to \bigoplus_j F_{jx}$, respectively.
\end{proof}

\begin{lemma}\label{vbmodiso} For isomorphic vector bundles $E \cong F$ the canonical $\CinfM$-module isomorphism $\Gamma(E) \cong \Gamma(F)$ is a homeomorphism.
\end{lemma}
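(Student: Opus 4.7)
The plan is to work locally via common trivializations and read off continuity from a Leibniz-type seminorm estimate.

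Let $\Phi\colon E \to F$ be the given vector bundle isomorphism over $\id_M$. It induces an algebraic $\CinfM$-linear map $\Phi_*\colon \Gamma(E) \to \Gamma(F)$, $s \mapsto \Phi \circ s$, with inverse $(\Phi^{-1})_*$, so by symmetry it suffices to show that $\Phi_*$ is continuous. By Proposition \ref{indepatlas} the topologies $\sT_E$ and $\sT_F$ do not depend on the chosen atlas or trivializing covering, so I take a single atlas $\{(U_i, \varphi_i)\}_i$ of $M$ together with trivializations $\tau_i^E$ of $E$ and $\tau_i^F$ of $F$ both over the same $U_i$ (intersecting all chart domains with both trivializing opens if necessary). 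In these matched trivializations $\Phi$ corresponds on each $U_i$ to a smooth map $A_i\colon U_i \to \Linc(\bE, \bF)$ given by $\tau_i^F \circ \Phi \circ (\tau_i^E)^{-1}(x,v) = (x, A_i(x)v)$, and the local representation of $\Phi \circ s$ is
\[
(\Phi \circ s)_{U_i, U_i}(x) = (A_i \circ \varphi_i^{-1})(x)\bigl(s_{U_i, U_i}(x)\bigr).
\]

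The Leibniz rule applied to this product then yields, for every compact $K \csub U_i$ and every $k \in \bN_0$, an estimate $\fp_{U_i, U_i, K, k}(\Phi \circ s) \le C_{i,K,k} \cdot \fp_{U_i, U_i, K, k}(s)$ (the left-hand seminorm is the one on $\Gamma(F)$ computed with $\tau_i^F$, the right-hand one is on $\Gamma(E)$ computed with $\tau_i^E$), where $C_{i,K,k}$ depends only on $\max\{\norm{\pd^\alpha(A_i \circ \varphi_i^{-1})(x)} : \abso{\alpha} \le k,\ x \in \varphi_i(K)\}$. Since the family of such $\fp_{U_i, U_i, K, k}$ is a basis of continuous seminorms for both $\sT_E$ and $\sT_F$, this delivers continuity of $\Phi_*$. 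Running the same argument with $\Phi^{-1}$ in place of $\Phi$ yields continuity of the inverse and hence the desired homeomorphism.

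The only point requiring care is the alignment of the trivializing coverings of $E$ and $F$ over a common open refinement so that a well-defined smooth transition $A_i$ can be extracted on each $U_i$; Proposition \ref{indepatlas} is precisely what permits this reduction without cost, after which the seminorm estimate is an immediate Leibniz computation of the same type already used in the preceding lemma.
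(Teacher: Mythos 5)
Your proof is correct and takes essentially the same route as the paper, whose entire argument is the one-line remark that for a vector bundle isomorphism $(f,f_0)$ the map $s \mapsto f \circ s \circ f_0^{-1}$ and its inverse are ``readily verified using the respective seminorms'' to be continuous; your matched-trivialization Leibniz estimate is precisely the verification left implicit there, and it is the same transition-function computation already carried out in Proposition \ref{indepatlas}. The only divergence is that the paper formally allows $f_0$ to be a nontrivial diffeomorphism of $M$, while you take $f_0=\id_M$ --- which is exactly what strict $\CinfM$-linearity of the isomorphism and its use in Theorem \ref{sectens} require, so nothing is lost.
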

\begin{proof}
If $(f, f_0)$ is the vector bundle isomorphism from $E$ to $F$ the isomorphism $\Gamma(E) \to \Gamma(F)$ is given by $s \mapsto f \circ s \circ f_0^{-1}$. It is readily verified using the respective seminorms that this assignment and its inverse are continuous.
\end{proof}


\begin{lemma}\label{plustensiso}Let $A$ be a locally convex algebra, $M_i$ ($i=1,\dotsc,n)$ locally convex right $A$-modules, and $N_j$ ($j=1,\dotsc,m)$ locally convex left $A$-modules. Then the canonical vector space isomorphism
\[ (\bigoplus_{\mathclap{i=1\dotsc n}} M_i ) \otimes (\bigoplus_{\mathclap{j=1\dotsc m}} N_j ) \cong \bigoplus_{\mathclap{\substack{i=1\dotsc n\\j=1\dotsc m}}} (M_i \otimes N_j) \]
induces isomorphisms of locally convex spaces
\begin{gather*}
(\bigoplus_{\mathclap{i=1\dotsc n}} M_i ) \otimes_\pi (\bigoplus_{\mathclap{j=1\dotsc m}} N_j ) \cong \bigoplus_{\mathclap{\substack{i=1\dotsc n\\j=1\dotsc m}}} (M_i \otimes_\pi N_j) \\
(\bigoplus_{\mathclap{i=1\dotsc n}} M_i ) \otimes_A^\pi (\bigoplus_{\mathclap{j=1\dotsc m}} N_j ) \cong \bigoplus_{\mathclap{\substack{i=1\dotsc n\\j=1\dotsc m}}} (M_i \otimes_A^\pi N_j).
\end{gather*}
If $A$ is commutative the last one is an isomorphism of $A$-modules.
\end{lemma}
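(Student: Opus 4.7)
The plan is to exhibit explicit continuous maps in both directions and check they are mutually inverse, using the universal properties of direct sums, of the projective tensor product of spaces (Proposition tpiso), and of the projective tensor product of modules. Because finitely many summands give $\bigoplus = \prod$ both algebraically and topologically, a continuous (bi)linear map out of a finite direct sum is continuous iff its restrictions to the summands are.

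First I would handle the case $(\bigoplus_i M_i) \otimes_\pi (\bigoplus_j N_j) \cong \bigoplus_{i,j}(M_i \otimes_\pi N_j)$. For the forward map, denote by $\pi_i^M, \pi_j^N$ the canonical projections out of the direct sums and by $\iota_{ij}$ the canonical injection into $\bigoplus_{i,j}(M_i \otimes_\pi N_j)$. The bilinear map
\[
\Phi\colon (\bigoplus_{\mathclap i} M_i)\times(\bigoplus_{\mathclap j} N_j)\to \bigoplus_{\mathclap{i,j}}(M_i \otimes_\pi N_j),\qquad (m,n)\mapsto \sum_{i,j}\iota_{ij}(\pi_i^M m \otimes \pi_j^N n)
\]
is continuous as a finite sum of continuous bilinear maps (each $\otimes\colon M_i \times N_j \to M_i\otimes_\pi N_j$ is continuous by definition, the projections are continuous, and $\iota_{ij}$ is continuous into the finite direct sum). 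By the universal property of $\otimes_\pi$ (Proposition tpiso(ii) for $A=\bK$) it factors through a continuous linear map $\bar\Phi$ from the left-hand side to the right-hand side. For the inverse, each inclusion composed with $\otimes$ gives a continuous bilinear map $M_i\times N_j\to (\bigoplus_i M_i)\otimes_\pi(\bigoplus_j N_j)$, hence a continuous linear $\Psi_{ij}\colon M_i\otimes_\pi N_j \to (\bigoplus_i M_i)\otimes_\pi(\bigoplus_j N_j)$; these assemble by the universal property of finite direct sums into a continuous linear $\Psi$ in the opposite direction. On elementary tensors one checks $\bar\Phi\circ\Psi = \id$ and $\Psi\circ\bar\Phi = \id$, and since such elements span densely (in fact span algebraically, together with the direct-sum decomposition) both composites equal the identity.

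For the balanced/module version, I would run the same argument but track the $A$-balanced condition. A $\bK$-bilinear $A$-balanced map out of $(\bigoplus_i M_i)\times(\bigoplus_j N_j)$ is balanced iff each component $M_i\times N_j\to E$ is balanced, because $\bigoplus_i M_i$ carries the componentwise right $A$-action and similarly on the left. Hence the maps $\Phi$ and the $\Psi_{ij}$ above are $A$-balanced, and by Proposition tpiso(ii) they descend through the quotient by $J$ to yield continuous linear maps between $(\bigoplus_i M_i)\otimes_A^\pi(\bigoplus_j N_j)$ and $\bigoplus_{i,j}(M_i\otimes_A^\pi N_j)$, still inverse on elementary tensors. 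When $A$ is commutative, the $A$-module structures on both sides are the ones induced by acting on a chosen factor; on elementary tensors the isomorphism is manifestly $A$-linear, so it is an isomorphism of locally convex $A$-modules.

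The only mild obstacle is topological: checking that $\Phi$ really is continuous, not merely separately continuous. This is handled by the fact that the direct sum is finite, so $\Phi$ is a finite linear combination of compositions of continuous maps with a continuous bilinear $\otimes\colon M_i\times N_j\to M_i\otimes_\pi N_j$; there is no subtle joint-continuity issue arising from infinite sums. No further verifications beyond those carried out in Proposition tpiso are required, and the identification of the two universal objects gives the claimed homeomorphisms.
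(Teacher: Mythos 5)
Your proof is correct, and for the first isomorphism it coincides in substance with the paper's: your $\bar\Phi$ and $\Psi$ are exactly the paper's maps $g$ (induced by the bilinear $\tilde g$, with continuity tested by composing with the finitely many projections) and $h = \sum_{i,j} h_{ij} \circ \pr_{ij}$ with $h_{ij} = \iota_i \otimes \iota_j$ continuous by Proposition \ref{tpfcont}. Where you genuinely diverge is the passage to $\otimes_A^\pi$. The paper stays with the quotient description $(\,\cdot\otimes_\pi\cdot\,)/\overline{J_0}$ throughout: it checks $g(J_0) = \bigoplus_{i,j} J_{ij}$, uses that $g$ and $h$ are continuous to conclude $g(\overline{J_0}) = \bigoplus_{i,j}\overline{J_{ij}}$, and then needs a further Bourbaki isomorphism
\[ f\colon \bigoplus_{i,j} \frac{M_i \otimes_\pi N_j}{\overline{J_{ij}}} \to \frac{\bigoplus_{i,j}(M_i \otimes_\pi N_j)}{\bigoplus_{i,j}\overline{J_{ij}}} \]
whose continuity in both directions must be verified separately, before a commutative diagram yields the induced map $\lambda$. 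You bypass the ideals $J_0$, $J_{ij}$ and their closures entirely by invoking the universal property of Proposition \ref{tpiso}(ii) on both sides: your $\Phi$, taken with values in $\bigoplus_{i,j}(M_i \otimes_A^\pi N_j)$, is $A$-balanced because the $A$-action on the finite direct sums is componentwise, your $\Psi_{ij}$ come from the balanced continuous maps $(m_i, n_j) \mapsto \iota_i m_i \otimes_A^\pi \iota_j n_j$, and mutual inverseness on elementary tensors suffices since these span algebraically (no density argument is needed). This is shorter and avoids the one delicate step in the paper — identifying the closure of $J_0$ with the direct sum of the closures — while the paper's route buys explicit bookkeeping of the quotient structure; since your maps agree with the paper's $\lambda$ on elementary tensors, nothing is lost for the later use of the explicit formulas in Theorem \ref{sectens}. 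One point you should make explicit: to apply Proposition \ref{tpiso} to $\bigoplus_i M_i$ and $\bigoplus_j N_j$ you need that a finite direct sum of locally convex $A$-modules is again a locally convex $A$-module; the paper records exactly this (for finitely many factors $\bigoplus = \prod$ topologically, and module multiplication is continuous because it is so componentwise) at the start of the direct-sum discussion in Section \ref{sec_sectop}, so a citation there closes the point.
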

\begin{proof}
By \cite[II \S3.7 Proposition 7]{Bourbaki} the mapping
\begin{align*}
 g\colon (\bigoplus_{\mathclap{i=1\dotsc n}} M_i ) \otimes (\bigoplus_{\mathclap{j=1\dotsc m}} N_j) &\to \bigoplus_{\mathclap{\substack{i=1\dotsc n\\j=1\dotsc m}}} (M_i \otimes N_j) \\
(m_i)_i \otimes (n_j)_j &\mapsto (m_i \otimes n_j)_{i,j}
\end{align*}
is a vector space isomorphism. Its inverse $h$ is induced by the mappings $h_{ij} \coleq \iota_i \otimes \iota_j$, where $\iota_i\colon M_i \to \bigoplus M_i$ and $\iota_j\colon N_j \to \bigoplus N_j$ are the canonical injections. This means that $h$ is given by $\sum_{ij} h_{ij} \circ \pr_{ij}$ where $\pr_{ij}$ is the canonical projection $\bigoplus_{ij} (M_i \otimes N_j) \to M_i \otimes N_j$.

Define $J_0$ as the sub-$\bZ$-module of $(\bigoplus M_i) \otimes (\bigoplus N_h)$ generated by all elements of the form $(m_i)_i a \otimes (n_j)_j - (m_i)_i \otimes a (n_j)_j$, and $J_{ij}$ as the sub-$\bZ$-module of $M_i \otimes N_j$ generated by all elements of the form $m_i a \otimes n_j - m_i \otimes a n_j$ with $m_i \in M_i$, $n_i \in N_i$, and $a \in A$. As $\bK$ is in the center of $A$ these are vector subspaces. By \cite[II \S 1.6]{Bourbaki} there is a canonical isomorphism of vector spaces
\[ f\colon \bigoplus_{i,j} \frac{M_i \otimes_\pi N_j}{\overline{J_{ij}}} \to \frac{\bigoplus_{i,j} (M_i \otimes_\pi N_j)}{\bigoplus_{i,j}\overline{J_{ij}}} \]
induced by the mappings $f_{ij} (m_i \otimes n_j + \overline{J_{ij}}) \coleq \iota(m_i \otimes n_j) + \bigoplus_{k,l}\overline{J_{kl}}$. Thus we obtain the following commutative diagram.

\[
 \xymatrix{
\ar[d]_-q (\bigoplus_i M_i ) \otimes_\pi (\bigoplus_j N_j) \ar@<0.5ex>[r]^-g & \bigoplus_{i,j} (M_i \otimes_\pi N_j) \ar@<0.5ex>[l]^-h \ar[dr]^-{(p_{ij})_{i,j}} \ar[d]^-r & \\
\dfrac{(\bigoplus_i M_i) \otimes_\pi (\bigoplus_j N_j)}{\overline{J_0}} \ar@{.>}[r]^-\lambda&
\dfrac{\bigoplus_{i,j} (M_i \otimes_\pi N_j)}{\bigoplus_{i,j}\overline{J_{ij}}} &
\bigoplus_{i,j}\dfrac{M_i \otimes_\pi N_j}{\overline{J_{ij}}} \ar[l]_-f
}
\]

Here $q$, $r$, and $p_{ij}$ are the projections onto the respective quotient.

It is now easily seen that $g(J_0) = \bigoplus_{i,j} J_{ij}$ and if $g$ and $h$ are continuous $g(\overline{J_0}) = \bigoplus_{i,j}\overline{J_{ij}}$, which immediately implies that there exists a vector space isomorphism $\lambda$ as in the diagram. The claims then follow if we show $f$, $f^{-1}$, $g$ and $h$ to be continuous.

First, $g$ is induced by the $C^\infty(M)$-bilinear map
\begin{align*}
 \tilde g\colon (\bigoplus_i M_i) \times (\bigoplus_j N_j) &\to \bigoplus_{i,j} (M_i \otimes N_j) \\
((m_i)_i, (n_j)_j) &\mapsto (m_i \otimes n_j)_{i,j}
\end{align*}
and $g$ is continuous if and only if $\tilde g$ is. Because the target space has only finitely many summands continuity can be tested by composition with the projections $\pi_{ij}$ onto $M_i \otimes N_j$. As $\pi_{ij} \circ \tilde g = \otimes \circ (\pi_i \times \pi_j)$ is continuous $g$ is continuous.

Second, $h$ is continuous because the $h_{ij}$, which are the tensor product of continuous mappings, are so. Similarly, $f$ is continuous because $f \circ \iota_{ij} \circ p_{ij} = r \circ (\iota_i \circ \iota_j)$ is continuous, where $\iota_{ij}: (M_i \otimes_\pi N_j)/\overline{J_{ij}} \to \bigoplus_{i,j} (M_i \otimes_pi N_j)/\overline{J_{ij}}$ is the canonical inclusion.

Finally, $f^{-1}$ is continuous if and only if $f^{-1} \circ r = (p_{ij})_{i,j}$ is, which is the case because all $p_{ij}$ are continuous and we can test continuity into the finite direct sum by composition with the projections on each factor.
\end{proof}

Note that for infinitely many summands the previous lemma is false, in general (\cite[15.5, 1. Example]{Jarchow}).

\section{Tensor product of section spaces}\label{sec_iso}

\begin{theorem}\label{sectens}For any vector bundles $E$ and $F$ on $M$ the canonical $\CinfM$-module isomorphism $\Gamma(E) \otimes_{\CinfM} \Gamma(F) \cong \Gamma(E \otimes F)$ induces a homeomorphism $\Gamma(E) \otimes^\pi_\CinfM \Gamma(F) \cong \Gamma(E \otimes F)$.
\end{theorem}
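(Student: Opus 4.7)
The plan is to first show the theorem for trivial bundles, then reduce the general case to this using the fact that every vector bundle over a paracompact manifold is a direct summand of a trivial bundle. Continuity in the forward direction is easy: the pointwise tensor product $\Gamma(E) \times \Gamma(F) \to \Gamma(E \otimes F)$, $(s, t) \mapsto s \otimes t$, is $\CinfM$-balanced and $\bK$-bilinear, and its continuity follows from the product rule applied locally to the seminorms of Section \ref{sec_sectop} in the same way module multiplication $\CinfM \times \Gamma(E) \to \Gamma(E)$ was shown to be continuous. Proposition \ref{tpiso}(ii) therefore produces a continuous $\CinfM$-linear map $\alpha_{E,F}\colon \Gamma(E) \otimes^\pi_\CinfM \Gamma(F) \to \Gamma(E \otimes F)$, and by hypothesis this is an algebraic isomorphism; only continuity of $\alpha_{E,F}^{-1}$ needs proof.

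For trivial bundles $E = M \times \bK^n$ and $F = M \times \bK^m$, Proposition \ref{secdsum} gives a homeomorphism $\Gamma(E) \cong \CinfM^n$, and likewise $\Gamma(F) \cong \CinfM^m$ and (via Lemma \ref{vbdsum} and Lemma \ref{vbmodiso}) $\Gamma(E \otimes F) \cong \CinfM^{nm}$. Lemma \ref{plustensiso} then yields $\Gamma(E) \otimes^\pi_\CinfM \Gamma(F) \cong \bigoplus_{i, j}(\CinfM \otimes^\pi_\CinfM \CinfM)$, and the multiplication map $\mu\colon \CinfM \otimes^\pi_\CinfM \CinfM \to \CinfM$, $f \otimes g \mapsto fg$, is continuous by Proposition \ref{tpiso}(ii); its inverse $f \mapsto f \otimes 1$ is continuous as the composition of the continuous affine map $f \mapsto (f, 1)$ with the continuous bilinear map $\otimes^\pi_\CinfM$. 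Thus $\CinfM \otimes^\pi_\CinfM \CinfM \cong \CinfM$ as locally convex spaces, and checking on elementary tensors shows that the resulting isomorphism $\Gamma(E) \otimes^\pi_\CinfM \Gamma(F) \cong \Gamma(E \otimes F)$ coincides with $\alpha_{E,F}$, which is therefore a homeomorphism.

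For general $E, F$, paracompactness of $M$ provides vector bundles $E', F'$ and integers $n, m$ with $E \oplus E' \cong M \times \bK^n$ and $F \oplus F' \cong M \times \bK^m$. Setting $(E_1, E_2) \coleq (E, E')$ and $(F_1, F_2) \coleq (F, F')$, successive application of Proposition \ref{secdsum}, Lemma \ref{vbdsum}, Lemma \ref{vbmodiso}, and Lemma \ref{plustensiso} produces a commutative square whose vertical arrows are homeomorphisms, whose top row is the direct-sum map $\bigoplus_{i, j}\alpha_{E_i, F_j}$, and whose bottom row is $\alpha_{E \oplus E', F \oplus F'}$. The trivial case just established makes the bottom row a homeomorphism, hence so is the top; and since a direct sum of linear maps between finite direct sums is a homeomorphism if and only if each summand is, $\alpha_{E, F} = \alpha_{E_1, F_1}$ is a homeomorphism as required.

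The main obstacle is verifying that the commutative square used in the reduction step does indeed commute: this amounts to tracing, on elementary tensors lying in a single summand $\Gamma(E_i) \otimes \Gamma(F_j)$, the various natural identifications between sections of direct sums and direct sums of sections, and between tensor products of direct sums and direct sums of tensor products. The check is routine but requires careful bookkeeping of the indices.
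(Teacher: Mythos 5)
Your proof is correct, and its overall architecture --- continuity of the forward map via the universal property (Proposition \ref{tpiso}), an explicit treatment of trivial bundles, then reduction of the general case via complementary bundles $E', F'$ with $E \oplus E'$, $F \oplus F'$ trivial, using Proposition \ref{secdsum} and Lemmas \ref{vbdsum}, \ref{vbmodiso}, \ref{plustensiso}, and finishing with the observation that a finite direct sum of linear maps between finite direct sums is a homeomorphism if and only if each summand is --- is exactly the paper's. Where you genuinely diverge is the trivial case: the paper constructs the inverse explicitly as $h(s) = \sum_{i,j}\gamma_{ij}^*(s)\,\alpha_i \otimes_\CinfM^\pi \beta_j$, relying on Lemma \ref{dualcont} for continuity of the dual basis functionals, whereas you reduce once more along direct sums to the one-dimensional identity $\CinfM \otimes_\CinfM^\pi \CinfM \cong \CinfM$, realized by the multiplication map $\mu$ with continuous section $f \mapsto f \otimes 1$. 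Your variant buys a basis-free argument: no dual bases and no Lemma \ref{dualcont} are needed, since continuity of the inverse comes for free from continuity of the canonical bilinear map $\otimes_\CinfM^\pi$ (and your section argument shows in passing that the balancing submodule is already closed in this case); the price is a second invocation of Lemma \ref{plustensiso} and an extra compatibility check identifying the composite with $\alpha_{E,F}$ on elementary tensors, while the paper's construction is more explicit and yields the inverse in closed form. One caveat: the ``routine bookkeeping'' you defer at the end should not be underestimated --- the elementary-tensor computation of $\lambda \circ \varphi^{-1} \circ \psi^{-1} \circ \rho^{-1}$, which is precisely your commutativity check for the square, occupies the bulk of the paper's proof of the general case; but since elementary tensors span the tensor product and all maps involved are continuous and linear, verifying the identity on them does suffice, so your argument closes.
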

\begin{proof}
Suppose first that $E$ and $F$ are trivial, then there are finite bases $\{\alpha_i\}_i$ and $\{\beta_j\}_j$ of $\Gamma(E)$ and $\Gamma(F)$, respectively. Clearly $E \otimes F$ then also is trivial and $\Gamma(E \otimes F)$ has a finite basis $\{\gamma_{ij}\}_{i,j}$. Explicitly these bases can be given as follows: suppose we have trivializing mappings $\tau\colon E \to M \times \bE$, $\sigma\colon F \to M \times \bF$ and $\mu\colon E \otimes F \to M \times (\bE \otimes \bF)$, with $\mu_x(v \otimes w) = (x, \pr_2 \circ \tau_x(v) \otimes \pr_2 \circ \sigma_x(w))$. Let $\{e_i\}_i$, $\{f_j\}_j$ be bases of $\bE$ resp.~$\bF$, which gives a basis $\{e_i \otimes f_j\}_{i,j}$ of $E \otimes F$. Then we set
\begin{equation*}
\begin{split}
\alpha_i(x) &\coleq \tau^{-1}(x, e_i), \\
\beta_j(x) &\coleq \sigma^{-1}(x, f_j),\textrm{ and} \\
\gamma_{ij}(x) &\coleq \mu^{-1}(x, e_i \otimes f_j) = \alpha_i(x) \otimes \beta_j(x).
\end{split}
\end{equation*}
Now $\{(\alpha_i,\beta_j)\}_{i,j}$ is a basis of $\Gamma(E) \times \Gamma(F)$.
There is a unique $\CinfM$-bilinear mapping $\tilde g\colon \Gamma(E) \times \Gamma(F) \to \Gamma(E \otimes F)$
such that $\tilde g(\alpha_i,  \beta_j)=\gamma_{ij}$ $\forall i,j$. Writing 
\[ \tilde g = \sum_{i,j} m \circ (\id \times m(\cdot, \gamma_{ij})) \circ (\alpha_i^* \times \beta_j^*) \]
where $m\colon \CinfM \times \Gamma(E \otimes F) \to \Gamma(E \otimes F)$ is module multiplication on $\Gamma(E \otimes F)$ and $\alpha_i^*$, $\beta_j^*$ are elements of the bases dual to $\{\alpha_i\}_i$ and $\{\beta_j\}_j$ (which are continuous by Lemma \ref{dualcont}) one sees that $\tilde g$ is continuous. Note that $g(t \otimes s)(x) = t(x) \otimes s(x)$ for $t \in \Gamma(E)$, $s \in \Gamma(E)$, and $x \in M$. By Corollary \ref{bornisoalin} $\tilde g$ induces a unique continuous $C^\infty(M)$-linear mapping $g\colon \Gamma(E) \otimes_{\CinfM}^\pi \Gamma(F) \to \Gamma(E \otimes F)$ such that $\tilde g = g \circ \otimes_\CinfM^\pi$.
\[
 \xymatrix{
\Gamma(E) \times \Gamma(F) \ar[r]^{\tilde g} \ar[d]_{\otimes_\CinfM^\pi} & \Gamma(E \otimes F) \ar@<0.5ex>[dl]^{h}\\
\Gamma(E) \otimes_\CinfM^\pi \Gamma(F) \ar@<0.5ex>[ru]^g & 
}
\]
For the inverse we define $h\colon \Gamma(E \otimes F) \to \Gamma(E) \otimes_\CinfM^\pi \Gamma(F)$ by $h(\gamma_{ij}) = \alpha_i \otimes_\CinfM^\pi \beta_j$, i.e., $h(s) = \sum_{i,j}\gamma_{ij}^*(s)\alpha_i \otimes_\CinfM^\pi \beta_j$ for $s \in \Gamma(E \otimes F$), which is continuous and $\CinfM$-linear.
Now it suffices to note that $g$ and $h$ are inverse to each other:
\begin{align*}
h(g(t \otimes_\CinfM^\pi u)) &= h(\tilde g(t^i\alpha_i, u^j\beta_j)) = h(t^iu^j\gamma_{ij}) = t^iu^j\alpha_i \otimes_\CinfM^\pi \beta_j\\
& = t^i\alpha_i \otimes_\CinfM^\pi u^j\beta_j = t \otimes_\CinfM^\pi u\textrm{ and}\\
g(h(s)) &= g(s^{ij}\alpha_i \otimes_\CinfM^\pi \beta_j) = s^{ij}\tilde g(\alpha_i, \beta_j) = s^{ij}\gamma_{ij} = s.
\end{align*}
Thus for trivial bundles we have established the $\CinfM$-module isomorphism and homeomorphism $\varphi_{E,F} \coleq h$,
\[ \varphi_{E,F}\colon \Gamma(E \otimes F) \to \Gamma(E) \otimes_\CinfM^\pi \Gamma(F). \]

 Now suppose that $E$ and $F$ are arbitrary vector bundles. Then by \cite[2.23]{GHV} there exist vector bundles $E'$ and $F'$ over $M$ such that $E \oplus E'$ and $F \oplus F'$ are trivial, giving an isomorphism $\varphi \coleq \varphi_{E \oplus E', F \oplus F'}$ as above:
\begin{equation}\label{isom1}
\Gamma((E \oplus E') \otimes (F \oplus F')) \cong \Gamma(E \oplus E') \otimes_\CinfM^\pi \Gamma(F \oplus F').
\end{equation}
We now distribute the direct sums on both sides and write down all isomorphisms involved. First, by Proposition \ref{secdsum} we have an isomorphism of $\CinfM$-modules and homeomorphism $\psi_{E,E'}\colon \Gamma(E \oplus E') \to \Gamma(E) \oplus \Gamma(E')$ given by
\begin{align*}
\psi_{E,E'}(s) & = [x \mapsto (\pr_1 \circ s(x), \pr_2 \circ s(x))]  = (\pr_1 \circ s, \pr_2 \circ s) \\
\psi_{E,E'}^{-1}(s_1, s_2) & = [x \mapsto (s_1(x), s_2(x))].
\end{align*}
As both $\psi \coleq \psi_{E,E'} \otimes_\CinfM^\pi \psi_{F,F'}$ and its inverse $\psi_{E,E'}^{-1} \otimes_\CinfM^\pi \psi_{F,F'}^{-1}$ are continuous (Proposition \ref{tpfcont}) we obtain an isomorphism of $\CinfM$-modules
\[ \psi: \Gamma(E \oplus E') \otimes_\CinfM^\pi \Gamma(F \oplus F') \to (\Gamma(E) \oplus \Gamma(E')) \otimes_\CinfM^\pi (\Gamma(F) \oplus \Gamma(F')) \] which also is a homeomorphism.
For the left hand side of \eqref{isom1}
we use the vector bundle isomorphism of Lemma \ref{vbdsum} given on each fiber by
\[ \kappa\colon (e, e') \otimes (f, f') \mapsto (e \otimes f, e \otimes f', e' \otimes f, e' \otimes f') \]
which by Lemma \ref{vbmodiso} gives a $\CinfM$-module isomorphism and homeomorphism $\lambda\colon s \mapsto \kappa \circ s$.

Let $\rho$ be the isomorphism from Lemma \ref{plustensiso} (denoted by $g$ in its proof). Explicitly, it maps
$(s,s') \otimes_\CinfM^\pi (t,t')$ to $(s \otimes_\CinfM^\pi t, s \otimes_\CinfM^\pi t', s' \otimes_\CinfM^\pi t, s' \otimes_\CinfM^\pi t')$.
Its inverse $\rho^{-1}$ is induced by the following mappings, all having image in the space $(\Gamma(E) \oplus \Gamma(E')) \otimes_\CinfM^\pi (\Gamma(F) \oplus \Gamma(F'))$:
\begin{align*}
\Gamma(E) \otimes_\CinfM^\pi \Gamma(F) \ni s_1 \otimes_\CinfM^\pi t_1 & \mapsto (s_1,0) \otimes_\CinfM^\pi (t_1,0), \\
\Gamma(E) \otimes_\CinfM^\pi \Gamma(F') \ni s_2 \otimes_\CinfM^\pi t_1' & \mapsto (s_2,0) \otimes_\CinfM^\pi (0,t_1'), \\
\Gamma(E') \otimes_\CinfM^\pi \Gamma(F) \ni s_1' \otimes_\CinfM^\pi t_2 & \mapsto (0,s_1') \otimes_\CinfM^\pi (t_2,0), \textrm{ and} \\
\Gamma(E') \otimes_\CinfM^\pi \Gamma(F') \ni s_2' \otimes_\CinfM^\pi t_2' & \mapsto (0,s_2') \otimes_\CinfM^\pi (0,t_2').
\end{align*}
This means that $\rho^{-1}(s_1 \otimes_\CinfM^\pi t_1, s_2 \otimes_\CinfM^\pi t_1', s_1' \otimes_\CinfM^\pi t_2, s_2' \otimes_\CinfM^\pi t_2')$ is given by
\begin{multline*}
(s_1, 0) \otimes_\CinfM^\pi (t_1, 0) + (s_2, 0 ) \otimes_\CinfM^\pi (0, t_1')\\
+ (0, s_1') \otimes_\CinfM^\pi (t_2, 0) + (0, s_2') \otimes_\CinfM^\pi (0, t_2').
\end{multline*}

The isomorphism $\Gamma(E) \otimes_\CinfM^\pi \Gamma(F) \cong \Gamma(E \otimes F)$ we are looking for will now be obtained as a component of $f \coleq \lambda \circ \varphi^{-1} \circ \psi^{-1} \circ \rho^{-1}$.
Note that $f$ is an isomorphism of $\CinfM$-modules and a homeomorphism by what was said so far. The composition $f$ is depicted in the following diagram.
\[
\xymatrix{
*\txt{$(\Gamma(E) \otimes_\CinfM^\pi \Gamma(F)) \oplus (\Gamma(E) \otimes_\CinfM^\pi \Gamma(F')) \oplus$\\$(\Gamma(E') \otimes_\CinfM^\pi \Gamma(F)) \oplus (\Gamma(E') \otimes_\CinfM^\pi \Gamma(F'))$}
\ar[d]^{\rho^{-1}} \\
(\Gamma(E) \oplus \Gamma(E')) \otimes_\CinfM^\pi (\Gamma(F) \oplus \Gamma(F')) \ar[d]^{\psi^{-1}} \\
\Gamma(E \oplus E') \otimes_\CinfM^\pi \Gamma(F \oplus F') \ar[d]^{\varphi^{-1}} \\
\Gamma((E \oplus E') \otimes (F \oplus F')) \ar[d]^{\lambda} \\
\Gamma(E \otimes F) \oplus \Gamma(E \otimes F') \oplus \Gamma(E' \otimes F) \oplus \Gamma(E' \otimes F')
}
\]

From this we obtain
\begin{align*}
(\lambda \circ & \varphi^{-1} \circ \psi^{-1} \circ \rho^{-1})
\begin{multlined}[t]
(s_1 \otimes_\CinfM^\pi t_1, s_2 \otimes_\CinfM^\pi t_1',\\
s_1' \otimes_\CinfM^\pi t_2, s_2' \otimes_\CinfM^\pi t_2')
\end{multlined} \\
& = (\lambda \circ \varphi^{-1} \circ \psi^{-1})
\begin{multlined}[t]
((s_1,0) \otimes_\CinfM^\pi (t_1,0) + (s_2,0) \otimes_\CinfM^\pi (0,t_1')\\
 + (0,s_1') \otimes_\CinfM^\pi (t_2,0) + (0,s_2') \otimes_\CinfM^\pi (0,t_2'))
     \end{multlined} \\
&= (\lambda \circ \varphi^{-1})
\begin{multlined}[t]( [x \mapsto (s_1(x), 0)] \otimes_\CinfM^\pi [ x \mapsto (t_1(x), 0)]\\
 + [x \mapsto (s_2(x), 0)] \otimes_\CinfM^\pi [ x \mapsto (0, t_1'(x))]\\
 + [x \mapsto (0, s_1'(x))] \otimes_\CinfM^\pi [x \mapsto (t_2(x), 0)]\\
 + [x \mapsto (0, s_2'(x))] \otimes_\CinfM^\pi [x \mapsto (0, t_2'(x))])
 \end{multlined} \\
& = \lambda
\begin{multlined}[t]
( [ x \mapsto (s_1(x), 0) \otimes (t_1(x), 0) ] + [x \mapsto (s_2(x), 0) \otimes (0, t_1'(x))] \\
+ [x \mapsto (0, s_1'(x)) \otimes (t_2(x), 0)] + [x \mapsto (0, s_2'(x)) \otimes (0, t_2'(x))]) 
\end{multlined}\\
&= \begin{multlined}[t]( [x \mapsto s_1(x) \otimes t_1(x) ], [x \mapsto s_2(x) \otimes t_1'(x)],\\
 [x \mapsto s_1'(x) \otimes t_2(x)], [x \mapsto s_2'(x) \otimes t_2'(x)] ).
\end{multlined}
\end{align*}

This means we can write $f = (f_1, f_2, f_3, f_4)$ with $f_1$ mapping $\Gamma(E) \otimes_\CinfM^\pi \Gamma(F)$ to $\Gamma(E \otimes F)$ and analogously for the other components. Because $f$ is bijective all $f_i$ have to be (\cite[Chapter II \S 1.6 Corollary 1 to Proposition 7]{Bourbaki}). As $f$ is a homeo\-morphism it follows immediately that all $f_i$ are homeomorphisms.
\end{proof}

Theorem \ref{sectens} implies that $\Gamma(E) \otimes_\CinfM^\pi \Gamma(F)$ is a Fr\'echet space.

Furthermore we obtain a homeomorphism for spaces of sections supported in a fixed compact set $K \csub M$. By Lemma \ref{topgleich} we thus have 
\begin{gather*}
\Gamma(E \otimes F) \cong \Gamma(E) \otimes_\CinfM^\pi \Gamma(F) = \Gamma(E) \otimes_\CinfM^\beta \Gamma(F) \\
\Gamma_{c,K}(E \otimes F) \cong \Gamma_{c,K}(E) \otimes_\CinfM^\pi \Gamma(F) = \Gamma_{c,K}(E) \otimes_\CinfM^\beta \Gamma(F).
\end{gather*}

We now prove the corresponding isomorphism for spaces of compactly supported sections.

\begin{remark}\label{algprop}
The validity of the following results is immediate from the fact that the functors $\mathunderscore \otimes_\CinfM^\beta \Gamma(F)$ and $\mathunderscore \times \Gamma(F)$ have as right adjoint the functor $\Linb_\CinfM(\Gamma(F), \mathunderscore)$, but we will explicitly prove them.
\end{remark}



\begin{lemma}\label{lfprod}Let a locally convex space $E$ be the strict inductive limit of a sequence of subspaces $E_n$ with embeddings $\iota_n\colon E_n \to E$ and let $F$ and $G$ be arbitrary locally convex spaces. Then a bilinear mapping $f\colon E \times F \to G$ is bounded if and only if all $f \circ (\iota_n \times \id)\colon E_n \times F \to G$ are bounded.
\end{lemma}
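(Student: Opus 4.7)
The easy direction is essentially automatic: if $f$ is bounded, then since $\iota_n \colon E_n \to E$ is continuous (as the canonical embedding into the strict inductive limit) and hence bounded, the product map $\iota_n \times \id \colon E_n \times F \to E \times F$ is bounded, and the composition $f \circ (\iota_n \times \id)$ is bounded as a composition of bounded maps.

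For the nontrivial direction, the plan is to use the Dieudonné--Schwartz structure theorem for strict inductive limits (cf.\ \cite[II.6.5]{Schaefer}): every bounded subset of $E$ is contained in some $E_n$ and bounded there. Given bounded sets $B \subseteq E$ and $C \subseteq F$, choose $n$ so that $B = \iota_n(B')$ for some bounded $B' \subseteq E_n$. Then $B' \times C$ is bounded in $E_n \times F$, so by hypothesis
\[
f(B \times C) = (f \circ (\iota_n \times \id))(B' \times C)
\]
is bounded in $G$. Since bounded sets in $E \times F$ are exactly those contained in a product of bounded sets in $E$ and $F$ (and a bilinear map is bounded iff it sends such products to bounded sets), this proves $f$ is bounded.

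The main (in fact only) obstacle is having the Dieudonné--Schwartz theorem at hand; once it is invoked, the bilinearity of $f$ is never really used beyond the fact that it suffices to test boundedness on products of bounded sets, and the argument reduces to a one-line set-theoretic identification using that $\iota_n$ is a topological (hence bornological) embedding. No appeal to Lemma \ref{borninftop} or to the convex bornology machinery of Section \ref{sec_indtop} is needed here, because the characterization of bounded sets in a strict (LF)-space already does all the work.
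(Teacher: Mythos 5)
Your proposal is correct and takes essentially the same route as the paper: the paper also reduces an arbitrary bounded set $B \subseteq E \times F$ to a product $B_1 \times B_2$ of bounded sets via the canonical projections, localizes $B_1$ into some $E_n$ (the Dieudonn\'e--Schwartz property of strict inductive limits, which you cite explicitly and the paper uses tacitly), and then applies the hypothesis to $f \circ (\iota_n \times \id)$. The only cosmetic difference is the direction of the reduction --- the paper projects a bounded subset of $E \times F$ onto its factors, while you test directly on products of bounded sets --- and these are equivalent under the product bornology.
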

\begin{proof}Necessity is clear. For sufficiency, let $B \subseteq E \times F$ be bounded. As the canonical projections $\pi_1$ onto $E$ and $\pi_2$ onto $F$ are bounded $B_1 \coleq \pi_1(B)$ and $B_2 \coleq \pi_2(B)$ are bounded and $B$ is contained in the bounded set $B_1 \times B_2$. Because $B_1$ is bounded it is contained in some $E_n$, thus by assumption $f(B) \subseteq f(B_1 \times B_2) = f ( \iota_n (B_1) \times B_2) = f \circ (\iota_n \times \id)(B_1 \times B_2)$ is bounded.
\end{proof}


\begin{theorem}\label{seciso2}There is a bornological $C^\infty(M)$-module isomorphism
\[ \Gamma_c(E) \otimes_\CinfM^\beta \Gamma(F) \cong \Gamma_c(E \otimes F). \]
\end{theorem}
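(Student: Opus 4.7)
The plan is to build mutually inverse bounded $\CinfM$-linear maps between the two spaces by exploiting the fixed-support isomorphism $\Gamma_{c,K}(E) \otimes_\CinfM^\beta \Gamma(F) \cong \Gamma_{c,K}(E \otimes F)$ noted immediately after Theorem \ref{sectens}, together with the strict inductive limit structure $\Gamma_c(E) = \varinjlim_K \Gamma_{c,K}(E)$ and $\Gamma_c(E \otimes F) = \varinjlim_K \Gamma_{c,K}(E \otimes F)$, where $K$ runs through a compact exhaustion of $M$.

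For the forward direction, I would consider the fiberwise tensor map $\Phi_0\colon \Gamma_c(E) \times \Gamma(F) \to \Gamma_c(E \otimes F)$, $\Phi_0(s,t)(x) \coleq s(x) \otimes t(x)$. It is $\CinfM$-bilinear and $\CinfM$-balanced, and its restriction $\Gamma_{c,K}(E) \times \Gamma(F) \to \Gamma_{c,K}(E \otimes F)$ is continuous by the $K$-fixed version of Theorem \ref{sectens}. Since any bounded set in $\Gamma_c(E) \times \Gamma(F)$ has its first projection contained in some $\Gamma_{c,K}(E)$ by the strict (LF) structure, $\Phi_0$ is bounded on all of $\Gamma_c(E) \times \Gamma(F)$, and Corollary \ref{bornisoalin} then yields a bounded $\CinfM$-linear map $\Phi\colon \Gamma_c(E) \otimes_\CinfM^\beta \Gamma(F) \to \Gamma_c(E \otimes F)$.

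For the inverse, I would compose the $K$-fixed isomorphism $\Gamma_{c,K}(E \otimes F) \cong \Gamma_{c,K}(E) \otimes_\CinfM^\beta \Gamma(F)$ with the map induced by the inclusion $\Gamma_{c,K}(E) \hookrightarrow \Gamma_c(E)$, which is bounded and $\CinfM$-linear by Proposition \ref{tpfcont}. This produces a compatible family $\Psi_K\colon \Gamma_{c,K}(E \otimes F) \to \Gamma_c(E) \otimes_\CinfM^\beta \Gamma(F)$ assembling into a single $\CinfM$-linear map $\Psi$ on all of $\Gamma_c(E \otimes F)$. It is bounded because any bounded set in the strict (LF) space $\Gamma_c(E \otimes F)$ is contained in some $\Gamma_{c,K}(E \otimes F)$ (the linear analogue of Lemma \ref{lfprod}). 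That $\Phi$ and $\Psi$ are mutual inverses reduces in each direction to a check on $\Gamma_{c,K}$, where it follows directly from the $K$-fixed isomorphism.

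The main obstacle is making the formation of $\otimes_\CinfM^\beta \Gamma(F)$ compatible with the strict inductive limit defining $\Gamma_c(E)$, which is precisely the step where the bornological tensor product is indispensable. The corresponding statement with $\otimes_\CinfM^\pi$ would require the delicate (and generally false) assertion that the projective tensor product commutes with (LF)-colimits; for the bornological version one only needs the much weaker fact that bounded sets in an (LF) space are captured by a single step, which is exactly what makes $\Psi$ globally bounded and gives the isomorphism its bornological character.
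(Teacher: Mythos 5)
Your proposal is correct and follows essentially the same route as the paper: the paper's map $h$ is your $\Phi$ (the fiberwise tensor map, shown bounded via Lemma \ref{lfprod}, i.e., the fact that bounded sets in a strict (LF) space live in a single step, then factored through Corollary \ref{bornisoalin}), and its map $g$ is your $\Psi$ (assembled from the compatible family obtained by composing the fixed-support isomorphism $\Gamma_{c,K}(E \otimes F) \cong \Gamma_{c,K}(E) \otimes_\CinfM^\beta \Gamma(F)$ with the map induced by the inclusion $\Gamma_{c,K}(E) \hookrightarrow \Gamma_c(E)$). The only cosmetic differences are that you justify boundedness of $\iota_K \otimes \id$ via Proposition \ref{tpfcont} where the paper invokes Corollary \ref{bornisoalin}, and that you glue $\Psi$ by boundedness where the paper uses continuity from the universal property of the strict inductive limit; both are equivalent here since all spaces involved are bornological.
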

 \begin{proof}
Consider the following diagram.
\[
 \xymatrix{
\Gamma_{c,K}(E) \times \Gamma(F) \ar[d]_{\iota_K \times \id} \ar[r]_{\otimes_\CinfM^\beta} & \Gamma_{c,K}(E) \otimes_\CinfM^\beta \Gamma(F) \ar[dd]^\varphi \ar[dddl]_(0.4){f_K} \\
\Gamma_c(E) \times \Gamma(F) \ar[dd]_-{\otimes_\CinfM^\beta} \ar[ddr]^(0.54){\tilde h}|!{[ur];[dd]}\hole|!{[dr];[dd]}\hole &  \\
& \Gamma_{c,K}(E \otimes F) \ar[d]^{\iota_K'} \ar[dl]_(0.55){g_K}\\
\Gamma_c(E) \otimes_\CinfM^\beta \Gamma(F) \ar@<-0.5ex>[r]_-h & \ar@<-0.5ex>[l]_-g \Gamma_c(E \otimes F) \\
}
\]
Here $\iota_K: \Gamma_{c,K}(E) \to \Gamma_c(E)$ and $\iota_K': \Gamma_{c,K}(E \otimes F) \to \Gamma_c(E \otimes F)$ are the inclusion mappings.
For $K \csub M$ the $\CinfM$-bilinear bounded mapping $\otimes_\CinfM^\beta \circ (\iota_K \times \id)$ by Corollary \ref{bornisoalin} induces a bounded (and thus continuous) linear mapping $f_K$. Because $\varphi$ is a homeomorphism there is a corresponding linear continuous mapping $g_K \coleq \varphi^{-1} \circ f_K$. Because $\Gamma_c(E \otimes F)$ is the strict inductive limit of the spaces $\Gamma_{c,K}(E \otimes F)$ and for different $K$ the mappings $g_K$ are compatible with each other there is a unique continuous linear mapping $g$ such that $g \circ \iota_K' = g_K$.

By Lemma \ref{lfprod} the bilinear mapping $\tilde h$ defined by $\tilde h(s,t)(x) \coleq s(x) \otimes t(x)$ is bounded because all $\tilde h \circ (\iota_K \times \id) = \iota_K' \circ \varphi \circ \otimes_\CinfM^\beta$ are bounded, thus a unique bounded linear mapping $h$ completing the diagram exists. It is easily verified that $g$ and $h$ are inverse to each other, which completes the proof.
\end{proof}

Because the spaces involved are bornological one can also say that the isomorphism of the previous theorem is a homeomorphism.

\begin{remark}Similarly one can obtain
\[ \Gamma(E) \otimes^\beta_{\CinfM} \Gamma_c(F) \cong \Gamma_c(E) \otimes^\beta_{\CinfM}\Gamma_c(F) \cong \Gamma_c(E \otimes F). \]
\end{remark}

Note that Lemma \ref{lfprod} and thus Theorem \ref{seciso2} only work in the bornological setting but not in the topological one.

\section{Distributions on manifolds}\label{sec_distrib}

In this chapter we will finally define the space of tensor distributions and give bornologically isomorphic representations. For additional information on distributions on manifolds we refer to \cite[Section 3.1]{GKOS}. In what follows $\Vol(M)$ denotes the volume bundle over $M$ (\cite[Definition 3.1.1]{GKOS}).

\begin{definition} The space of distributions a manifold $M$ is defined as 
\[ \DpM \coleq [\Gamma_c(M, \Vol(M))]' \]
and the space of tensor distributions of rank $(r,s)$ on $M$ as
\[ \DprsM \coleq [\Gamma_c(M, \TsrM \otimes \Vol(M))]'. \]
The spaces of compactly supported sections are equipped with the (LF)-topology described in Section \ref{sec_sectop} which is bornological, thus these are exactly the bounded linear functionals. $\DpM$ and $\DprsM$ carry the strong dual topology (\cite[Chapter 19]{Treves}).
\end{definition}

The following is the bornological version of \cite[Theorem 3.1.12]{GKOS}.

\begin{theorem}\label{distiso}There are bornological $\CinfM$-module isomorphisms
\begin{align}
\label{dist_eins}\DprsM &\cong (\cTsrM \otimes_\CinfM^\beta \Gamma_c(M, \Vol(M)))' \\
\label{dist_zwei}&\cong \Linb_{\CinfM}(\cTsrM, \DpM) \\
\label{dist_drei}&\cong \cTrsM \otimes_\CinfM^\beta \DpM.
\end{align}
\end{theorem}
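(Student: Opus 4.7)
The plan is to establish the three isomorphisms in succession, with the first two being direct applications of earlier results and the third requiring a projectivity argument for the tensor bundle.

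For \eqref{dist_eins}, I apply the remark following Theorem \ref{seciso2} with $E = \TsrM$ and $F = \Vol(M)$ to obtain a bornological $\CinfM$-module isomorphism
\[ \cTsrM \otimes_\CinfM^\beta \Gamma_c(M,\Vol(M)) \cong \Gamma_c(M, \TsrM \otimes \Vol(M)). \]
Since a bornological isomorphism preserves bounded sets in both directions and the strong dual topology is defined by seminorms indexed by bounded sets, transposition is a topological---hence bornological---$\CinfM$-module isomorphism of the strong duals, which is exactly \eqref{dist_eins}.

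For \eqref{dist_zwei} I use that $\cTsrM \otimes_\CinfM^\beta \Gamma_c(\Vol(M))$ is bornological, so its strong dual agrees with $\Linb(\cTsrM \otimes_\CinfM^\beta \Gamma_c(\Vol(M)), \bK)$ as a locally convex space. By Proposition \ref{tpiso}(iv) this is bornologically isomorphic to $\Lin^{\CinfM,b}(\cTsrM, \Gamma_c(\Vol(M)); \bK)$, and the exponential law of \eqref{bornisolb} further to $\Linb(\cTsrM, \DpM)$. Under this composite identification the $\CinfM$-balancing condition $f(ta,\omega) = f(t, a\omega)$ corresponds precisely to $\CinfM$-linearity of $t \mapsto f(t,\cdot)$, because the $\CinfM$-action on $\DpM$ is $(a \cdot u)(\omega) \coleq u(a\omega)$. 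Restricting gives the desired bornological isomorphism onto $\Linb_\CinfM(\cTsrM, \DpM)$.

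For \eqref{dist_drei} the strategy is to exploit finite-rank projectivity of $\cTsrM$ over $\CinfM$. The canonical contraction $\langle\,,\,\rangle\colon \cTrsM \times \cTsrM \to \CinfM$ induces a $\CinfM$-balanced bounded bilinear map $\cTrsM \times \DpM \to \Linb_\CinfM(\cTsrM, \DpM)$, $(\omega, u) \mapsto [t \mapsto \langle \omega, t\rangle \cdot u]$, once one knows the $\CinfM$-action on $\DpM$ is jointly bounded. By Proposition \ref{tpiso}(iv) it factors through a bounded $\CinfM$-linear map
\[ \Phi\colon \cTrsM \otimes_\CinfM^\beta \DpM \to \Linb_\CinfM(\cTsrM, \DpM). \]
When $\TsrM$ is trivial, $\cTsrM \cong \CinfM^N \cong \cTrsM$ with dual bases $\{\alpha_i\}, \{\beta_i\}$, and a direct computation using $\CinfM \otimes_\CinfM^\beta \DpM \cong \DpM$ together with distributivity of $\otimes_\CinfM^\beta$ over finite direct sums (a bornological analogue of Lemma \ref{plustensiso}) identifies both sides with $\DpM^N$ and exhibits $T \mapsto \sum_i \beta_i \otimes T(\alpha_i)$ as the bornological inverse of $\Phi$.

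For arbitrary $\TsrM$, I invoke \cite[2.23]{GHV} to obtain a complementary bundle $E'$ with $\TsrM \oplus E'$ trivial. Both sides of the desired isomorphism are additive in the bundle argument---by Proposition \ref{secdsum}, Lemma \ref{vbmodiso}, and the bornological direct-sum distributivity---and $\Phi$ is natural in the bundle, so restricting the known isomorphism for the trivial total bundle to the $\TsrM$-summand yields \eqref{dist_drei}. The chief obstacle is precisely this last step: establishing the bornological distributivity of $\otimes_\CinfM^\beta$ over finite direct sums and verifying joint boundedness of the $\CinfM$-action on $\DpM$. Once these parallels of already-proved results are in place (their proofs follow the pattern of Lemma \ref{plustensiso} with bounded universal properties substituted for continuous ones), the reduction of the general case to the trivial one is formal.
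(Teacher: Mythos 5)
Your proposal is correct, and for \eqref{dist_eins} and \eqref{dist_zwei} it coincides with the paper's proof: the paper likewise invokes Theorem \ref{seciso2} (strictly speaking the remark following it, as you correctly note, since the compact supports sit in the $\Vol(M)$-factor), passes to strong duals using that both section spaces are bornological, and then reads off \eqref{dist_zwei} from Proposition \ref{tpiso} via \eqref{oktober_alpha} exactly as you do. For \eqref{dist_drei} you use the same contraction-induced map $\Phi=\theta_\cTsrM$, the same complementary bundle from \cite[2.23]{GHV}, and the same dual-basis inverse $\ell \mapsto \sum_i b_i^* \otimes \ell(b_i)$, but you organize the reduction differently: you prove the trivial case by identifying both sides with a finite power of $\DpM$ and then transfer to the general case by additivity and naturality, which forces you to establish a bornological analogue of Lemma \ref{plustensiso} (distributivity of $\otimes_\CinfM^\beta$ over finite direct sums) that the paper never proves. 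The paper avoids this entirely: it first settles the \emph{algebraic} isomorphism by the retract diagram, obtaining the explicit inverse $\theta_\cTsrM^{-1} = (\iota^* \otimes \id) \circ \theta_F^{-1} \circ \pi^{\mathrm{t}}$, and then verifies boundedness of each factor in that composition directly --- $\theta$ via the joint boundedness of Lemma \ref{dpmult} (which you correctly flag as the essential ingredient), $\theta_F^{-1}$ via the dual-basis formula, and $\iota^*\otimes\id$, $\pi^{\mathrm{t}}$ via Proposition \ref{tpfcont} --- so the tensor product itself is never decomposed as a direct sum. Your missing lemma is true and does go through as you claim (boundedness into and out of finite direct sums is tested componentwise, so the universal property \eqref{oktober_alpha} identifies both sides on $\Linb$, and the quotient-by-closure issue is harmless for finitely many summands), so there is no gap in substance; your route buys a cleaner conceptual picture (unit law plus additivity) at the cost of an extra lemma, while the paper's retract composition is the more economical path given what has already been proved.
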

\begin{proof}\eqref{dist_eins} is clear from the bornological isomorphism of $\CinfM$-modules
\[ \Gamma_c(M, \TsrM \otimes \Vol(M)) \cong \cTsrM \otimes_\CinfM^\beta \Gamma_c(M, \Vol(M)) \]
given by Theorem \ref{seciso2}. As both spaces are bornological it is also an isomorphism of topological vector spaces, thus the duals are homeomorphic (\cite[Chapter 23]{Treves}).

\eqref{dist_eins} $\leftrightsquigarrow$ \eqref{dist_zwei} is clear from Proposition \ref{tpiso}.

For \eqref{dist_zwei} $\leftrightsquigarrow$ \eqref{dist_drei} consider the map
\[ \theta_\cTsrM\colon \cTsrM^* \otimes_{\CinfM} \DpM \to \Lin_{\CinfM}(\cTsrM, \DpM) \]
induced by the bilinear map
\begin{equation}\label{baldfertig}
 \begin{aligned}
  \cTsrM^* \times \DpM &\to \Lin_\CinfM(\cTsrM, \DpM) \\
  (u^*, v) &\mapsto [u \mapsto u^*(u) \cdot v].
 \end{aligned}
\end{equation}
Because $\cTsrM$ is finitely generated and projective it is a direct summand of a free finitely generated $\CinfM$-module $F$ with injection $\iota$ and projection $\pi$. By \cite[2.23]{GHV} there exists a vector bundle $C \to M$ such that $\TsrM \oplus C$ is trivial, thus we can take $F = \cTsrM \oplus \Gamma(C)$. Note that duals of $F$ and $\cTsrM$ here are always meant with respect to the $\CinfM$-module structure.
By standard methods (cf.\ the proof of \cite[Theorem 14.10]{blyth}) one obtains the commutative diagram
\[
 \xymatrix{
F^* \otimes_{\CinfM} \DpM \ar[r]^-{\iota^* \otimes \id} \ar[d]_{\theta_F}& \cTsrM^* \otimes_{\CinfM} \DpM \ar[r]^-{\pi^* \otimes \id} \ar[d]_{\theta_\cTsrM} & F^* \otimes_{\CinfM} \DpM \ar[d]_{\theta_F} \\
\Lin_{\CinfM}(F,\DpM) \ar[r]_-{\iota^\mathrm{t}} & \Lin_{\CinfM}(\cTsrM,\DpM) \ar[r]_-{\pi^\mathrm{t}} & \Lin_{\CinfM}(F,\DpM)
}
\]
with mappings
\begin{gather*}
 \iota^*\colon F^* \to \cTsrM^*,\ u^* \mapsto u^* \circ \iota \\
 \pi^*\colon \cTsrM^* \to F^*,\ u^* \mapsto u^* \circ \pi \\
 \iota^\mathrm{t}\colon \Lin_{\CinfM}(F, \DpM) \to \Lin_{\CinfM}(\cTsrM, \DpM),\ \ell \mapsto \ell \circ \iota \\
 \pi^\mathrm{t}\colon \Lin_{\CinfM}(\cTsrM, \DpM) \to \Lin_{\CinfM}(F, \DpM),\ \ell \mapsto \ell \circ \pi
\end{gather*}
where $\iota^* \otimes \id$ and $\iota^\mathrm{t}$ are surjective while $\pi^* \otimes \id$ and $\pi^\mathrm{t}$ are injective.

The inverse of $\theta_F$ can be given explicitly because $F$ is free and finitely generated. Let $\{b_1,\dotsc, b_n\}$ be a basis of $F$ and $\{b_1^*, \dotsc, b_n^*\}$ the corresponding dual basis of $F^*$. For $\ell \in \Lin_\CinfM(F, \DpM)$ we have
\[ \theta_F^{-1}(\ell) = \sum_{\mathclap{i=1\dotsc n}} b_i^* \otimes \ell(b_i) \in F^* \otimes_\CinfM \DpM. \]
This implies that also $\theta_\cTsrM$ is an isomorphism, its inverse is given by the composition  $(\iota^* \otimes \id) \circ \theta_F^{-1} \circ \pi^\mathrm{t}$.

As \eqref{baldfertig} is bounded from $\cTsrM' \times \DpM$ into $\Linb_\CinfM(\cTsrM, \DpM)$ the induced mapping $\theta_{\cTsrM}\colon \cTsrM' \otimes_\CinfM^\beta \DpM \to \Linb_\CinfM(\cTsrM, \DpM)$ is bounded and linear. Because $\iota$ and $\pi$ obviously are continuous all mappings in the following diagram are bounded. 
\[
 \xymatrix{
F' \otimes^\beta_\CinfM \DpM \ar[r]^-{\iota^* \otimes \id} \ar[d]_{\theta_F}& \cTsrM' \otimes^\beta_\CinfM \DpM \ar[r]^-{\pi^* \otimes \id} \ar[d]_{\theta_\cTsrM} & F' \otimes^\beta_\CinfM \DpM \ar[d]_{\theta_F} \\
\Linb_{\CinfM}(F,\DpM) \ar[r]_-{\iota^\mathrm{t}} & \Linb_{\CinfM}(\cTsrM,\DpM) \ar[r]_-{\pi^\mathrm{t}} & \Linb_{\CinfM}(F,\DpM)
}
\]
Concluding, $\theta_F^{-1}\colon \ell \mapsto \sum_i b_i^* \otimes_\CinfM^\beta \ell(b_i)$ is bounded into $F' \otimes_\CinfM^\beta \DpM$ whence $\theta_\cTsrM^{-1} = (\iota^* \otimes \id) \circ \theta_F^{-1} \circ \pi^\mathrm{t}$ also is bounded.
\end{proof}

 \begin{lemma}\label{dpmult} Multiplication $\CinfM \times \DpM \to \DpM$, $(f,T) \mapsto f \cdot T = [\omega \mapsto \langle T, f \cdot \omega \rangle ]$ is bounded.
 \end{lemma}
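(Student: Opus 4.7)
The plan is to show that for any bounded set $B_1 \subseteq \CinfM$ and any bounded set $B_2 \subseteq \DpM$, the product $B_1 \cdot B_2 = \{\, f \cdot T : f \in B_1, T \in B_2\,\}$ is bounded in $\DpM$; this is equivalent to boundedness of the given bilinear multiplication.

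First, I would establish that the auxiliary scalar-multiplication map $\mu\colon \CinfM \times \Gamma_c(M, \Vol(M)) \to \Gamma_c(M, \Vol(M))$, $(f, \omega) \mapsto f \cdot \omega$, is bounded. Fix a compact exhaustion $(K_n)_n$ of $M$ so that $\Gamma_c(M, \Vol(M)) = \varinjlim \Gamma_{c, K_n}(M, \Vol(M))$ as a strict (LF)-limit. Using the product rule on each chart exactly as in the proof that $\Gamma(M, \Vol(M))$ is a locally convex $\CinfM$-module, one obtains estimates of the form $\fp_{U_i, V_j, K, k}(f \omega) \le C\, \fp_{U_i, K, k}(f)\, \fp_{U_i, V_j, K, k}(\omega)$, which show that the restriction $\mu_n\colon \CinfM \times \Gamma_{c, K_n}(M, \Vol(M)) \to \Gamma_{c, K_n}(M, \Vol(M)) \hookrightarrow \Gamma_c(M, \Vol(M))$ is continuous and hence bounded. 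Applying Lemma \ref{lfprod} (with the roles of the two factors swapped, which is allowed by the symmetric formulation) to the strict inductive limit in the second slot then yields that $\mu$ is bounded on all of $\CinfM \times \Gamma_c(M, \Vol(M))$.

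Second, I would translate this to the dual. Recall that a subset $S \subseteq \DpM$ is bounded in the strong dual topology iff $\sup_{T \in S, \omega \in B} |\langle T, \omega\rangle| < \infty$ for every bounded $B \subseteq \Gamma_c(M, \Vol(M))$. Now let $B_1 \subseteq \CinfM$ and $B_2 \subseteq \DpM$ be bounded, and let $B \subseteq \Gamma_c(M, \Vol(M))$ be bounded. By the first step, $\mu(B_1 \times B) = B_1 \cdot B$ is bounded in $\Gamma_c(M, \Vol(M))$, and therefore
\[
\sup_{\substack{f \in B_1 \\ T \in B_2}} \sup_{\omega \in B} |\langle f \cdot T, \omega\rangle| = \sup_{T \in B_2}\ \sup_{\omega' \in B_1 \cdot B} |\langle T, \omega'\rangle| < \infty,
\]
since $B_2$ is strongly bounded. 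As $B$ was arbitrary this shows that $B_1 \cdot B_2$ is bounded in $\DpM$, which is exactly boundedness of the bilinear map.

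I do not foresee any real obstacle; the only slightly delicate point is step one, where one must not confuse joint continuity with joint boundedness on the (LF)-space $\Gamma_c(M, \Vol(M))$. That is precisely what Lemma \ref{lfprod} is made for, and the remark immediately after Theorem \ref{seciso2} emphasises that joint continuity need not hold (consistent with the introduction's statement that multiplication is only separately continuous but jointly bounded).
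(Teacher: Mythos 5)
Your proof is correct, but it takes a genuinely different route from the paper's. The paper's proof is a three-line duality argument: since $\Gamma_c(M,\Vol(M))$ is barrelled (being a strict (LF)-space), the bornology of $\DpM$ consists of all \emph{weakly} bounded sets, so it suffices to check that $\{\,\langle T, f\cdot\omega\rangle : f \in B_1, T \in B_2\,\}$ is bounded for each \emph{fixed} $\omega \in \ocM$; this needs only that $B_1\cdot\omega$ is bounded (separate boundedness of multiplication, with all products supported in the fixed compact set $\supp\omega$) together with the uniform boundedness of $B_2$ on bounded sets. You instead prove the strictly stronger intermediate statement that multiplication $\CinfM \times \ocM \to \ocM$ is \emph{jointly} bounded, using Lemma \ref{lfprod} (whose proof is indeed symmetric in the two factors, so swapping slots is legitimate) together with the product-rule seminorm estimates on each Fr\'echet step $\Gamma_{c,K_n}$, and then verify strong boundedness of $B_1 \cdot B_2$ directly from the definition of the strong dual topology. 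What the paper's approach buys is brevity, at the price of invoking the weak-equals-strong boundedness equivalence for duals of barrelled spaces; what your approach buys is independence from that Banach--Steinhaus-type input, working directly with the strong bornology, at the price of needing the localization of bounded sets in strict (LF)-limits (already built into Lemma \ref{lfprod} via the Dieudonn\'e--Schwartz property), and it fits naturally with the paper's own machinery from Theorem \ref{seciso2}. Both arguments are sound; your step two is exactly the transposition identity $\langle f\cdot T,\omega\rangle = \langle T, f\cdot\omega\rangle$ applied to the bounded set $B_1\cdot B$, which is unimpeachable.
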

 \begin{proof}
As the bornology of $\DpM$ consists of all weakly bounded sets we only have to verify that for $B_1 \subseteq \CinfM$ and $B_2 \subseteq \DpM$ both bounded $\{\, \langle T, f \cdot \omega \rangle\ |\ f \in B_1, T \in B_2\,\}$ is bounded for each $\omega \in \ocM$, which follows because $\{\, f \cdot \omega\ |\ f \in B_1\,\}$ is bounded in $\ocM$ and $B_2$ is uniformly bounded on bounded sets.
\end{proof}

\begin{remark}
\begin{enumerate}
 \item[(i)] A result analogous to Theorem \ref{distiso} is obviously valid for distributions of arbitrary density character taking values in any vector bundle instead of the tensor bundle (cf.~\cite[Definition 3.1.4]{GKOS}).
 \item[(ii)] Because multiplication of distributions is not jointly continuous (\cite{kucera}) the proof of Theorem \ref{distiso} fails for the projective tensor product. 
\end{enumerate}
\end{remark}

\section*{Acknowledgments}

This research has been supported by START-project Y237 and project P20525 of
the Austrian Science Fund and the Doctoral College 'Differential Geometry and Lie
Groups' of the University of Vienna.

\bibliographystyle{halpha}
\bibliography{master}

\end{document}